\definecolor{cof}{RGB}{219,144,71}
\definecolor{pur}{RGB}{186,146,162}
\definecolor{greeo}{RGB}{91,173,69}
\definecolor{greet}{RGB}{52,111,72}
\begin{document}

\newtheorem{theorem}{Theorem}
\newtheorem{lemma}[theorem]{Lemma}
\newtheorem{corollary}[theorem]{Corollary}
\newtheorem{proposition}[theorem]{Proposition}
\newtheorem{fact}[theorem]{Fact}
\newtheorem{observation}[theorem]{Observation}
\newtheorem{claim}[theorem]{Claim}

\newtheorem{definition}[theorem]{Definition}
\newtheorem{example}[theorem]{Example}
\newtheorem{conjecture}[theorem]{Conjecture}
\newtheorem{open}[theorem]{Open Problem}
\newtheorem{problem}[theorem]{Problem}
\newtheorem{question}[theorem]{Question}

\newtheorem{remark}[theorem]{Remark}
\newtheorem{note}[theorem]{Note}

%

\newcommand{\FFock}{\mathcal{F}}
\newcommand{\kil}{\mathsf{k}}
\newcommand{\Hil}{\mathsf{H}}
\newcommand{\hil}{\mathsf{h}}
\newcommand{\Kil}{\mathsf{K}}
\newcommand{\Real}{\mathbb{R}}
\newcommand{\Rplus}{\Real_+}

\newcommand{\bC}{{\mathbb{C}}}
\newcommand{\bD}{{\mathbb{D}}}
\newcommand{\bN}{{\mathbb{N}}}
\newcommand{\bQ}{{\mathbb{Q}}}
\newcommand{\bR}{{\mathbb{R}}}
\newcommand{\bT}{{\mathbb{T}}}
\newcommand{\bX}{{\mathbb{X}}}
\newcommand{\bZ}{{\mathbb{Z}}}
\newcommand{\bH}{{\mathbb{H}}}
\newcommand{\BH}{{\B(\H)}}
\newcommand{\bsl}{\setminus}
\newcommand{\ca}{\mathrm{C}^*}
\newcommand{\cstar}{\mathrm{C}^*}
\newcommand{\cenv}{\mathrm{C}^*_{\text{env}}}
\newcommand{\rip}{\rangle}
\newcommand{\ol}{\overline}
\newcommand{\td}{\widetilde}
\newcommand{\wh}{\widehat}
\newcommand{\sot}{\textsc{sot}}
\newcommand{\wot}{\textsc{wot}}
\newcommand{\wotclos}[1]{\ol{#1}^{\textsc{wot}}}
 \newcommand{\A}{{\mathcal{A}}}
 \newcommand{\B}{{\mathcal{B}}}
 \newcommand{\C}{{\mathcal{C}}}
 \newcommand{\D}{{\mathcal{D}}}
 \newcommand{\E}{{\mathcal{E}}}
 \newcommand{\F}{{\mathcal{F}}}
 \newcommand{\G}{{\mathcal{G}}}
 \renewcommand{\H}{{\mathcal{H}}}
 \newcommand{\I}{{\mathcal{I}}}
 \newcommand{\J}{{\mathcal{J}}}
 \newcommand{\K}{{\mathcal{K}}}
 \renewcommand{\L}{{\mathcal{L}}}
 \newcommand{\M}{{\mathcal{M}}}
 \newcommand{\N}{{\mathcal{N}}}
 \renewcommand{\O}{{\mathcal{O}}}
 \renewcommand{\P}{{\mathcal{P}}}
 \newcommand{\Q}{{\mathcal{Q}}}
 \newcommand{\R}{{\mathcal{R}}}
 \renewcommand{\S}{{\mathcal{S}}}
 \newcommand{\T}{{\mathcal{T}}}
 \newcommand{\U}{{\mathcal{U}}}
 \newcommand{\V}{{\mathcal{V}}}
 \newcommand{\W}{{\mathcal{W}}}
 \newcommand{\X}{{\mathcal{X}}}
 \newcommand{\Y}{{\mathcal{Y}}}
 \newcommand{\Z}{{\mathcal{Z}}}

\newcommand{\supp}{\operatorname{supp}}
\newcommand{\conv}{\operatorname{conv}}
\newcommand{\cone}{\operatorname{cone}}
\newcommand{\vspan}{\operatorname{span}}
\newcommand{\proj}{\operatorname{proj}}
\newcommand{\sgn}{\operatorname{sgn}}
\newcommand{\rank}{\operatorname{rank}}
\newcommand{\Isom}{\operatorname{Isom}}
\newcommand{\qIsom}{\operatorname{q-Isom}}
\newcommand{\Cknet}{{\mathcal{C}_{\text{knet}}}}
\newcommand{\Ckag}{{\mathcal{C}_{\text{kag}}}}
\newcommand{\rind}{\operatorname{r-ind}}
\newcommand{\lind}{\operatorname{r-ind}}
\newcommand{\ind}{\operatorname{ind}}
\newcommand{\coker}{\operatorname{coker}}
\newcommand{\Aut}{\operatorname{Aut}}
\newcommand{\Hom}{\operatorname{Hom}}
\newcommand{\GL}{\operatorname{GL}}
\newcommand{\tr}{\operatorname{tr}}

\newcommand{\eqnwithbr}[2]{%
\refstepcounter{equation}
\begin{trivlist}
\item[]#1 \hfill $\displaystyle #2$ \hfill (\theequation)
\end{trivlist}}

\newcommand{\te}{\tilde{e}}
\newcommand{\tf}{\tilde{f}}
\newcommand{\tu}{\tilde{u}}
\newcommand{\tv}{\tilde{v}}
\newcommand{\ta}{\tilde{a}}
\newcommand{\tb}{\tilde{b}}
\newcommand{\tc}{\tilde{c}}
\newcommand{\tx}{\tilde{x}}
\newcommand{\ty}{\tilde{y}}
\newcommand{\tz}{\tilde{z}}
\newcommand{\tw}{\tilde{w}}

\title[Motions of grid-like reflection frameworks]{Motions of grid-like reflection frameworks}

\author[D. Kitson and  B. Schulze]{Derek Kitson  and  Bernd Schulze}
 \thanks{The first named author is supported by EPSRC grant  EP/P01108X/1.}
\thanks{The second named author is supported by EPSRC grant  EP/M013642/1.}
\email{d.kitson@lancaster.ac.uk, b.schulze@lancaster.ac.uk}
\address{Dept.\ Math.\ Stats.\\ Lancaster University\\
Lancaster LA1 4YF \\U.K. }

\subjclass[2010]{52C25,  05C50}
\keywords{bar-joint framework,  infinitesimal rigidity, non-Euclidean rigidity, orbit matrix, signed graph, sparsity counts.}

\begin{abstract} Combinatorial characterisations are obtained of symmetric and anti-symmetric  infinitesimal rigidity for two-dimensional frameworks with reflectional symmetry in the case of norms where the unit ball is a quadrilateral and where the reflection acts freely on the vertex set. At the framework level, these characterisations are given in terms of induced monochrome subgraph decompositions, and at the graph level they are given in terms of sparsity counts and recursive construction sequences for the corresponding signed quotient graphs.
\end{abstract}

\maketitle



\section{Introduction}
The objects considered in this article are geometric constraint systems where the constraints are determined by a possibly non-Euclidean choice of norm.
The main results are new contributions in both geometric and combinatorial rigidity. At the geometric level, characterisations are provided for rigid two-dimensional symmetric frameworks constrained by norms with a quadrilateral unit ball (the $\ell^1$ and $\ell^\infty$ norms for example). 
At the combinatorial level, the problem of deciding whether a graph can be realized as a forced symmetric or anti-symmetric isostatic reflection framework is considered and complete characterisations are obtained. 
Overall this article builds on recent work analyzing the rigidity of frameworks in normed linear spaces, with and without symmetry (see for example \cite{kitson,kit-pow,kit-sch,kit-sch1}). 

A bar-joint framework in the plane is referred to as {\em grid-like} if the bar-lengths are determined by a norm with a quadrilateral unit ball. 
The allowable motions of such a framework constrain vertices adjacent to any pinned vertex to move along the boundary of a quadrilateral which is centred at the pinned vertex and obtained from the unit ball by translation and dilation (see Fig.~\ref{fig:gridfw}).
This is an important context from the point of view of applications. 
For example, the problem of maintaining rigid formations of mobile autonomous agents is a well-known application of geometric rigidity theory and its associated ``pebble game" algorithms (see \cite{EAMWB}). 
However, the Euclidean metric may not always be the most natural choice for controlling a formation.
For instance, it may not be possible to detect Euclidean distances between agents  (eg. due to obstacles in the terrain). Moreover, if the agents have restricted mobility (eg. with only vertical and horizontal  directions of motion possible) then standard methods from Euclidean rigidity theory will have limited use. In these cases it may be desirable to have a rigidity theory for a non-Euclidean norm (such as the $\ell^1$ or $\ell^\infty$ norm) as an alternative approach to  formation control. 
An accompanying theory for {\em symmetric} frameworks may provide more efficient architectures for the control of formations due to the smaller size of the quotient graphs and their associated constraint systems.

 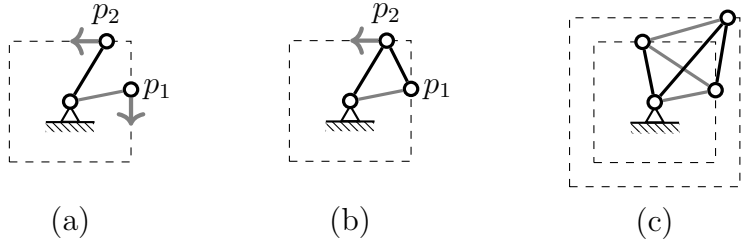
\begin{figure}[htp]
\begin{center}
\begin{tikzpicture}[very thick,scale=0.8]
\tikzstyle{every node}=[circle, draw=black, fill=white, inner sep=0pt, minimum width=5pt];

\draw[thin,dashed] (-1,-1)--(1,-1)--(1,1)--(-1,1)--(-1,-1);

\node [draw=white, fill=white] (a) at (1.44,0.2)  {$p_1$};
\node [draw=white, fill=white] (a) at (0.6,1.45)  {$p_2$};

\fill[draw=black, fill=white, thick, yshift=-0.25cm] (90:0.17cm) -- (210:0.17cm) -- (-30:0.17cm) -- cycle;
\draw[thin, draw=white, pattern=north west lines] (-0.4,-0.34)--(0.4,-0.34)--(0.4,-0.51)--(-0.4,-0.51)-- (-0.4,-0.34) ;
\draw[thick] (-0.4,-0.34)--(0.4,-0.34);
\node (p0) at (0,0) {};

\node (p1) at (1,0.2) {};
\node (p2) at (0.6,1) {};

\draw[gray](p0)--(p1);
\draw(p0)--(p2);

\draw[->,gray,ultra thick] (p1)--(1,-0.4);
\draw[->,gray,ultra thick] (p2)--(0,1);

\node [draw=white, fill=white] (a) at (0,-2)  {(a)};

\end{tikzpicture}
\hspace{1.2cm}
\begin{tikzpicture}[very thick,scale=0.8]
\tikzstyle{every node}=[circle, draw=black, fill=white, inner sep=0pt, minimum width=5pt];

\draw[thin,dashed] (-1,-1)--(1,-1)--(1,1)--(-1,1)--(-1,-1);

\node [draw=white, fill=white] (a) at (1.44,0.2)  {$p_1$};
\node [draw=white, fill=white] (a) at (0.6,1.45)  {$p_2$};

\fill[draw=black, fill=white, thick, yshift=-0.25cm] (90:0.17cm) -- (210:0.17cm) -- (-30:0.17cm) -- cycle;
\draw[thin, draw=white, pattern=north west lines] (-0.4,-0.34)--(0.4,-0.34)--(0.4,-0.51)--(-0.4,-0.51)-- (-0.4,-0.34) ;
\draw[thick] (-0.4,-0.34)--(0.4,-0.34);
\node (p0) at (0,0) {};

\node (p1) at (1,0.2) {};
\node (p2) at (0.6,1) {};

\draw[gray](p0)--(p1);
\draw(p0)--(p2);
\draw(p1)--(p2);

\draw[->,gray,ultra thick] (p2)--(0,1);

\node [draw=white, fill=white] (a) at (0,-2)  {(b)};

\end{tikzpicture}
\hspace{1.2cm}
\begin{tikzpicture}[very thick,scale=0.8]
\tikzstyle{every node}=[circle, draw=black, fill=white, inner sep=0pt, minimum width=5pt];

\draw[thin,dashed] (-1,-1)--(1,-1)--(1,1)--(-1,1)--(-1,-1);
\draw[thin,dashed] (-1.4,-1.4)--(1.4,-1.4)--(1.4,1.4)--(-1.4,1.4)--(-1.4,-1.4);

\fill[draw=black, fill=white, thick, yshift=-0.25cm] (90:0.17cm) -- (210:0.17cm) -- (-30:0.17cm) -- cycle;
\draw[thin, draw=white, pattern=north west lines] (-0.4,-0.34)--(0.4,-0.34)--(0.4,-0.51)--(-0.4,-0.51)-- (-0.4,-0.34) ;
\draw[thick] (-0.4,-0.34)--(0.4,-0.34);
\node (p0) at (0,0) {};

\node (p1) at (1,0.2) {};
\node (p2) at (-0.2,1) {};
\node (p3) at (1.2,1.4) {};

\draw[gray](p0)--(p1);
\draw(p0)--(p2);
\draw[gray](p1)--(p2);
\draw(p3)--(p0);
\draw(p3)--(p1);
\draw[gray](p3)--(p2);

\node [draw=white, fill=white] (a) at (0,-2)  {(c)};

\end{tikzpicture}
\end{center}
\vspace{-0.2cm}
\caption{Grid-like frameworks in $(\mathbb{R}^2,\|\cdot\|_\infty)$, where one of the vertices is fixed at the origin: the framework in (a) has two degrees of freedom, as $p_1$  and $p_2$ can move vertically and horizontally, respectively, independent of each other; the framework in (b) has one degree of freedom, as $p_2$ can still move horizontally; the framework in (c) is rigid. The colours of the edges are induced by their orientation relative to the unit ball in $(\mathbb{R}^2,\|\cdot\|_\infty)$.}
\label{fig:gridfw}
\end{figure}

There are three main aims of this article. The first is to formally introduce and develop symmetric and anti-symmetric infinitesimal rigidity for $\bZ_2$-symmetric frameworks in general normed linear spaces. This is achieved in Section \ref{Sect:orbit}.
Each infinitesimal flex is shown to decompose in a unique way as a sum of a symmetric and an anti-symmetric flex. Moreover, the rigidity operator is shown to admit a block decomposition which leads in a natural way to a consideration of orbit matrices. Sparsity counts, expressed in terms of an associated signed quotient graph, are then derived for symmetrically and anti-symmetrically isostatic frameworks. When applied to Euclidean frameworks, the block decomposition reduces to that studied in \cite{KG2,owen,BS2}, while the orbit matrices and sparsity counts coincide with those in \cite{jkt,schtan,BSWW}.

The second aim is to characterise symmetric, anti-symmetric and general infinitesimal rigidity for grid-like frameworks with reflectional symmetry, where the reflection acts {\em freely} on the vertex set. In Section \ref{Sect:framework}, characterisations are obtained in terms of edge colourings for the signed quotient graph. These edge colourings are induced from a symmetric edge-colouring of the covering graph which is in turn induced by the positioning of the framework relative to the unit ball. This may be viewed as an extension to symmetric frameworks of methods used in \cite{kitson,kit-pow}.   

The third aim, which is in the spirit of Laman's theorem (see \cite{Lamanbib,tay,W1}), is to provide combinatorial characterisations for graphs which admit placements as rigid grid-like frameworks with reflectional symmetry. 
This is achieved in Section \ref{Sect:graph} for both symmetric and anti-symmetric infinitesimal rigidity. The characterisations provide the sufficiency direction for the necessary sparsity counts derived in the general theory of Section \ref{Sect:orbit}. The proof applies an inductive construction for signed quotient graphs together with the results of Section \ref{Sect:framework}.
Note that these matroidal counts can be checked in polynomial time using a straightforward adaptation of the algorithm described in \cite[Sect.~10]{jkt} (see also \cite{bhmt}).

The results of Section \ref{Sect:graph} are analogous to the corresponding results for Euclidean reflection frameworks in \cite{jkt,MT2}. It is important to note, however, that unlike the Euclidean situation (see \cite{schtan}), the respective characterisations of graphs which admit symmetric or anti-symmetric  rigid placements as grid-like reflection frameworks cannot be combined to characterise graphs which admit rigid placements as grid-like reflection frameworks. This is due to the fact that the respective sets of symmetric and anti-symmetric rigid grid-like realisations of a graph may be disjoint (see Fig.~\ref{fig:antisym} for example). A combinatorial characterisation of graphs which admit a  realisation as a grid-like  \emph{isostatic}  reflection framework was recently given in \cite{kit-sch1}.  However, as shown in \cite{kit-sch,kit-sch1}, such  a framework must have a vertex which is fixed by the reflection.

\section{$\bZ_2$-symmetric frameworks in normed spaces}
\label{Sect:orbit}

Throughout this article $G=(V,E)$ will denote a finite simple undirected graph with vertex set $V$ and edge set $E$. 
An edge $e\in E$ which is incident to vertices $v,w\in V$ will be denoted $vw$. An automorphism of $G$ is a bijective map $h:V\to V$ with the property that $vw\in E$ if and only if $h(v)h(w)\in E$. The group (under composition) of graph automorphisms of $G$ is denoted $\Aut(G)$.
Consider the multiplicative group $\bZ_2$ with elements $\{1,-1\}$.
A {\em $\bZ_2$-symmetric graph} is a pair $(G,\theta)$ consisting of a graph $G$ and a group homomorphism $\theta:\bZ_2\to  \textrm{Aut}(G)$.
When there is no danger of ambiguity, 
$\theta(-1)v$ will be denoted by $-v$ for each vertex $v\in V$ and $(-v)(-w)$ will be denoted by $-e$ for each edge $e=vw\in E$.
The action $\theta$ is assumed throughout to be {\em free} on the vertex set of $G$ which means that $v \neq -v$ for all $v\in V$.
It will not be assumed that the action is free on the edge set of $G$ and so there may be edges $e\in E$ such that $e=-e$. Such an edge is said to be {\em fixed} by $\theta$.  
The {\em vertex orbit} of a vertex $v\in V$ under the action $\theta$ is the pair $[v]:=\{v,-v\}$. 
The set of all vertex orbits is denoted $V_0$.
Similarly, the {\em edge orbit} of an edge $e\in E$ is the pair $[e]:=\{e,-e\}$ and the set of all edge orbits is denoted $E_0$. 

\subsection{Symmetric and anti-symmetric motions}
Let $(X,\|\cdot\|)$ be a finite dimensional normed real linear space. A {\em rigid motion} of $(X,\|\cdot\|)$ is a family of continuous paths $\{\alpha_x:[-1,1]\to X\}_{x\in X}$,
such that $\alpha_x(t)$ is differentiable at $t=0$ with $\alpha_x(0)=x$ and $\|\alpha_x(t)-\alpha_y(t)\| =\|x-y\|$ for all pairs $x,y\in X$ and all $t\in [-1,1]$.

The {\em rigidity map} for $G=(V,E)$ and $(X,\|\cdot\|)$ 
is defined by, 
\[f_G:X^{|V|}\to \bR^{|E|},\quad (x_v)_{v\in V}\mapsto (\|x_v-x_w\|)_{vw\in E}.\]
The directional derivative of the rigidity map $f_G$ at a point $p\in X^{|V|}$ and in the direction of a vector $u\in X^{|V|}$ is denoted $D_uf_G(p)$,
\[D_uf_G(p) = \lim_{t\to 0}\,\frac{1}{t}\left(f_G(p+tu)-f_G(p)\right).\]

A {\em bar-joint framework} in $(X,\|\cdot\|)$ is a pair $(G,p)$ where $p=(p_v)_{v\in V}\in X^{|V|}$ and $p_v\not=p_w$ for all $vw\in E$. A {\em subframework} of $(G,p)$ is a bar-joint framework $(H,p_H)$ (or simply $(H,p)$) where $H=(V(H),E(H))$ is a subgraph of $G$ and $p_H=(p_v)_{v\in V(H)}$. A subframework $(H,p)$ is {\em spanning} in $(G,p)$ if $H$ is a spanning subgraph of $G$ and {\em proper} if $H\not=G$. 

An {\em infinitesimal flex} for $(G,p)$ is a vector $u\in X^{|V|}$ such that $D_uf_G(p)=0$. 
The collection of all infinitesimal flexes of $(G,p)$ forms a linear subspace of $X^{|V|}$, denoted $\F(G,p)$. 
It can be shown (see \cite[Lemma 2.1]{kit-pow}) that if $\{\alpha_x\}_{x\in X}$ is a rigid motion of $(X,\|\cdot\|)$ then
$(\alpha_{p_v}'(0))_{v\in V}\in X^{|V|}$ is an infinitesimal flex of $(G,p)$.
An infinitesimal flex of this type is said to be {\em trivial} and the collection of all trivial infinitesimal flexes forms a linear subspace of $\F(G,p)$, denoted  $\T(G,p)$. 
A bar-joint framework is said to be {\em infinitesimally rigid} if every infinitesimal flex is trivial and {\em isostatic} if, in addition, no proper spanning subframework is infinitesimally rigid. 

If the rigidity map $f_G$ is differentiable at $p$ then the differential is denoted $df_G(p)$. In this case, $(G,p)$ is said to be {\em well-positioned} in $(X,\|\cdot\|)$ and $df_G(p)$ is referred to as the {\em rigidity operator} for $(G,p)$.
Note that the rigidity operator $df_G(p)$ satisfies,
\begin{eqnarray}
\label{OpFormula}
df_G(p)u = \left(\,\varphi_{v,w}(u_v-u_w)\,\right)_{vw\in E},
\end{eqnarray}
for all $u=(u_v)_{v\in V}\in X^{|V|}$ where  $\varphi_{v,w}:X\to\bR$ is a linear functional defined by, 
\[\varphi_{v,w}(x) = \lim_{t\to 0}\,
\frac{1}{t}(\|p_v-p_w+tx\|-\|p_v-p_w\|), \,\,\,\,\,\,\, \forall\,x\in X.\] 
In this way the rigidity operator may be represented by a {\em rigidity matrix}  of linear functionals with rows indexed by $E$ and columns indexed by $V$. (For details see \cite{kit-sch}).

Let $\Isom(X,\|\cdot\|)$ denote the group of linear isometries of $(X,\|\cdot\|)$.
A bar-joint framework $(G,p)$ is said to be {\em \textrm{$\bZ_2$-symmetric}} with respect to an action $\theta:\bZ_2\rightarrow \Aut(G)$ and a group representation $\tau:\bZ_2\rightarrow \Isom(X,\|\cdot\|)$  if $\tau(-1)(p_v)=p_{-v}$ for all  $v\in V$.

\begin{lemma} \label{decomp}
\label{Decomp}
Let $(G,p)$ be a well-positioned bar-joint framework in $(X,\|\cdot\|)$ which is $\bZ_2$-symmetric with respect to an action $\theta:\bZ_2\rightarrow \Aut(G)$ and a  representation $\tau:\bZ_2\rightarrow \Isom(X,\|\cdot\|)$.
\begin{enumerate}[label=(\roman*)]
\item
$X^{|V|}$ may be expressed as a direct sum $X^{|V|}=X_1\oplus X_2$ where, 
\begin{eqnarray*}
X_1 &=& \{(x_v)_{v\in V}\in X^{|V|}:\, x_{-v}
= \,\tau(-1)x_v,\,\,\,\,\forall\,\,v\in V\},
\end{eqnarray*}
\begin{eqnarray*}
X_2 &=& \{(x_v)_{v\in V}\in X^{|V|}: x_{-v}
= -\tau(-1)x_v,\,\,\forall\,\,v\in V\}.
\end{eqnarray*}
\item $\bR^{|E|}$ may be expressed as a direct sum 
$\bR^{|E|} = Y_1\oplus Y_2$ where,
\begin{eqnarray*}
Y_1 &=& \{(y_e)_{e\in E}\in \bR^{|E|}:\, y_{-e}=\, y_e,\,\,\,\,
\forall\,\,e\in E\},
\end{eqnarray*}
\begin{eqnarray*}
Y_2 &=& \{(y_e)_{e\in E}\in \bR^{|E|}: y_{-e}=- y_e,\,\,
\forall\,\,e\in E\}.
\end{eqnarray*}
\item
With respect to the direct sum decompositions, 
\[X^{|V|} = X_1\oplus X_2, \,\,\,\,\,\mbox{ and, } \,\,\,\,\,\,
\bR^{|E|} = Y_1\oplus Y_2,\]
the differential $df_G(p)$ may be expressed as a direct sum of linear transformations,
\[df_G(p)=R_1\oplus R_2,\]
where $R_1:X_1\to Y_1$ and $R_2:X_2\to Y_2$.
\end{enumerate}
\end{lemma}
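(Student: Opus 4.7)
The plan is to establish (i) and (ii) by direct averaging constructions and then deduce (iii) from an equivariance identity for the linear functionals $\varphi_{v,w}$.

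For (i), I would define projections $P_1,P_2\colon X^{|V|}\to X^{|V|}$ by
\[(P_1 x)_v=\tfrac{1}{2}\bigl(x_v+\tau(-1)x_{-v}\bigr),\qquad (P_2 x)_v=\tfrac{1}{2}\bigl(x_v-\tau(-1)x_{-v}\bigr).\]
A short check using $\tau(-1)^2=\operatorname{id}_X$ and $\theta(-1)^2=\operatorname{id}_V$ shows $P_1 x\in X_1$, $P_2 x\in X_2$, and $x=P_1 x+P_2 x$. Any $x\in X_1\cap X_2$ satisfies $\tau(-1)x_v=x_{-v}=-\tau(-1)x_v$, forcing $x_v=0$, so the sum is direct. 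Part (ii) is the scalar analogue: the maps $y\mapsto\tfrac12(y_e\pm y_{-e})$ project $\bR^{|E|}$ onto $Y_1$ and $Y_2$ respectively, and on a fixed edge $e=-e$ the anti-symmetric projection vanishes automatically, consistent with the defining condition $y_e=-y_e$ for $Y_2$.

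The crux is (iii). I would first derive the equivariance identity
\[\varphi_{-v,-w}(\tau(-1)x)=\varphi_{v,w}(x)\]
for all $vw\in E$ and $x\in X$. This follows because $\tau(-1)$ is a linear isometry with $\tau(-1)p_v=p_{-v}$ for every $v\in V$, so
\[\|p_{-v}-p_{-w}+t\,\tau(-1)x\|=\|\tau(-1)(p_v-p_w+tx)\|=\|p_v-p_w+tx\|,\]
and the claim follows by dividing by $t$ and letting $t\to 0$ in the defining limit for $\varphi$. Inserting this into formula~\eqref{OpFormula}, for $u\in X_1$ one has $u_{-v}-u_{-w}=\tau(-1)(u_v-u_w)$, so the $(-v)(-w)$-entry of $df_G(p)u$ equals its $vw$-entry, yielding $df_G(p)X_1\subseteq Y_1$. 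The same computation for $u\in X_2$ introduces an overall minus sign and yields $df_G(p)X_2\subseteq Y_2$.

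The one step requiring extra care is (iii) on edges $e=v(-v)$ fixed by $\theta$, where membership in $Y_2$ forces the entry to be zero. Here the equivariance identity combined with $\varphi_{-v,v}=-\varphi_{v,-v}$ gives $\varphi_{v,-v}(\tau(-1)x)=-\varphi_{v,-v}(x)$; applied to the $\tau(-1)$-invariant difference $u_v-u_{-v}=u_v+\tau(-1)u_v$ for $u\in X_2$, this forces the corresponding entry of $df_G(p)u$ to equal its own negative and hence vanish. This completes the block diagonalisation of the rigidity operator.
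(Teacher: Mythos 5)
Your proposal is correct and follows essentially the same route as the paper: the same averaging projections $x_v\mapsto\tfrac12(x_v\pm\tau(-1)x_{-v})$ for (i)--(ii), and the same equivariance identity $\varphi_{-v,-w}\circ\tau(-1)=\varphi_{v,w}$ combined with formula~(\ref{OpFormula}) for (iii). Your explicit justification of that identity from the isometry property, and your separate check that the $Y_2$-entry vanishes on fixed edges, are details the paper leaves implicit, but they do not change the argument.
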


\proof
Each $(x_v)_{v\in V}\in X^{|V|}$ may be expressed 
as a sum $a+b$ where $a=\left(\frac{1}{2}(x_v+\tau(-1)(x_{-v}))\right)_{v\in V}$
and $b=\left(\frac{1}{2}(x_v-\tau(-1)(x_{-v}))\right)_{v\in V}$.
Note that $a\in X_1$ and $b\in X_2$. 
Similarly, each 
$(y_e)_{e\in E}\in \bR^{|E|}$ may be expressed as
a sum $a+b$ where 
$a=\left(\frac{1}{2}(y_e+y_{-e})\right)_{e\in E}\in Y_1$
and $b=\left(\frac{1}{2}(y_e-y_{-e})\right)_{e\in E}\in Y_2$.
To prove $(i)$ and $(ii)$ it only remains to note that $X_1\cap X_2=\{0\}$ and $Y_1\cap Y_2=\{0\}$.

To prove $(iii)$, let $vw\in E$ and note that if $(x_v)_{v\in V}\in X_1$ then,
\[\varphi_{v,w}(x_v-x_w) = \varphi_{-v,-w}(\tau(-1)(x_v-x_w))
=\varphi_{-v,-w}(x_{-v}-x_{-w}).\] 
Similarly, if $(x_v)_{v\in V}\in X_2$ then, 
\[\varphi_{v,w}(x_v-x_w) = \varphi_{-v,-w}(\tau(-1)(x_v-x_w))
=\varphi_{-v,-w}(-(x_{-v}-x_{-w}))=-\varphi_{-v,-w}(x_{-v}-x_{-w}).\]By equation \ref{OpFormula}), $df_G(p)(X_1)\subset Y_1$ and $df_G(p)(X_2)\subset Y_2$ and so the result follows.
\endproof

A vector $u=(u_v)_{v\in V}\in X^{|V|}$ will be called {\em symmetric} if $u\in X_1$ and {\em anti-symmetric} if $u\in X_2$.
The vector spaces of symmetric and anti-symmetric infinitesimal flexes of $(G,p)$ are respectively denoted  $\F_1(G,p)$ and $\F_2(G,p)$. 
Similarly, the vector spaces of symmetric and anti-symmetric trivial infinitesimal flexes are respectively denoted  $\T_1(G,p)$ and $\T_2(G,p)$. A straight-forward verification shows that
$\F(G,p)=\F_1(G,p)\oplus \F_2(G,p)$ 
and $\T(G,p) = \T_1(G,p)\oplus \T_2(G,p)$.

The following observation will be applied in the next section. The identity operator on $X$ is denoted $I$.

\begin{lemma} \label{lem:dimmotions}
Let $(G,p)$ be a well-positioned and $\bZ_2$-symmetric bar-joint framework in $(X,\|\cdot\|)$.
If the group of linear isometries $\Isom(X,\|\cdot\|)$ is finite
then,
\begin{enumerate}[label=(\roman*)]
\item 
$\dim \T(G,p) = \dim X$.
\item 
$\dim \T_1(G,p) = \rank (I+\tau(-1))$. 
\item 
$\dim \T_2(G,p) = \rank (I-\tau(-1))$.
\end{enumerate}
\end{lemma}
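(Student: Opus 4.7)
The plan is to first show that the finiteness of $\Isom(X,\|\cdot\|)$ forces every trivial infinitesimal flex to be a \emph{constant} flex $u_v=x$ for all $v\in V$, for some single $x\in X$; this is the main obstacle in the argument, and everything that follows is linear algebra. Given a rigid motion $\{\alpha_y\}_{y\in X}$, the map $y\mapsto\alpha_y(t)$ is distance-preserving on $X$ for each fixed $t$ and hence, by the Mazur--Ulam theorem, affine of the form $\alpha_y(t)=A_ty+b_t$ with $A_t\in\Isom(X,\|\cdot\|)$ and $b_t\in X$. The condition $\alpha_y(0)=y$ forces $A_0=I$ and $b_0=0$, and continuity of the paths $y\mapsto\alpha_y(t)$ (applied at $y=0$ and at basis vectors) translates into continuity of $t\mapsto A_t$. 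Since $\Isom(X,\|\cdot\|)$ is finite and therefore discrete, $A_t\equiv I$ in a neighbourhood of $0$, so $\alpha_y'(0)=b'(0)$ is independent of $y$. Conversely, any $x\in X$ arises as the derivative of the translation $\alpha_y(t):=y+tx$, which is a rigid motion. Thus $\T(G,p)$ is isomorphic via the diagonal embedding to $X$, yielding $(i)$.

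For $(ii)$ and $(iii)$, I would set $P:=\tau(-1)$, a linear involution on $X$, and intersect the space of constant flexes with $X_1$ and $X_2$. A constant flex $(x,x,\ldots)$ lies in $X_1$ precisely when $x=Px$, i.e.\ when $x\in\ker(I-P)$, and lies in $X_2$ precisely when $x=-Px$, i.e.\ when $x\in\ker(I+P)$. Hence
\[\dim\T_1(G,p)=\dim\ker(I-P)\quad\text{and}\quad\dim\T_2(G,p)=\dim\ker(I+P).\]

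To match these kernel dimensions with the ranks in the statement, I would exploit the involution identity $P^2=I$. The computation $(I-P)(I+P)=I-P^2=0$ shows $\mathrm{im}(I+P)\subseteq\ker(I-P)$, and the identity $y=\tfrac{1}{2}(I+P)y$ valid for $y\in\ker(I-P)$ gives the reverse inclusion; hence $\rank(I+P)=\dim\ker(I-P)$. An analogous argument yields $\rank(I-P)=\dim\ker(I+P)$, establishing $(ii)$ and $(iii)$.
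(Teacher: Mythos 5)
Your proposal is correct, and its logical skeleton is the same as the paper's: reduce everything to the claim that when $\Isom(X,\|\cdot\|)$ is finite every trivial flex is a constant vector $(x,\ldots,x)$, and then read off $(ii)$ and $(iii)$ by intersecting the constant flexes with $X_1$ and $X_2$ and using the involution identities $\ker(I-\tau(-1))=\operatorname{im}(I+\tau(-1))$ and $\ker(I+\tau(-1))=\operatorname{im}(I-\tau(-1))$. The one genuine difference is that the paper simply cites \cite{kit-pow} for the constancy of trivial flexes, whereas you reprove it from scratch via Mazur--Ulam plus discreteness of the finite isometry group; this makes your argument self-contained at the cost of one small unstated step, namely that Mazur--Ulam requires the distance-preserving map $y\mapsto\alpha_y(t)$ to be surjective. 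That is automatic here (a distance-preserving self-map of a finite-dimensional normed space is injective and continuous, so its image is open by invariance of domain and also closed by completeness, hence all of $X$), but you should say so. The linear algebra in $(ii)$ and $(iii)$ is exactly what the paper leaves to the reader and is carried out correctly.
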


\proof
It is shown in \cite{kit-pow} that if $\Isom(X,\|\cdot\|)$ is finite then $\T(G,p) = \{(x,\ldots,x)\in X^{|V|}:x\in X\}$.
Part $(i)$ is an immediate consequence of this while $(ii)$ and $(iii)$ follow on considering the definitions of $X_1$ and $X_2$.  
\endproof

\begin{definition}
A $\bZ_2$-symmetric bar-joint framework $(G,p)$ in $(X,\|\cdot\|)$
 is said to be,
\begin{enumerate}
\item {\em  (anti-)~symmetrically infinitesimally rigid}
if every (anti-)~symmetric infinitesimal flex of $(G,p)$ is a trivial infinitesimal flex.
\item
{\em (anti-)~symmetrically isostatic} if it is (anti-)~symmetrically infinitesimally rigid and no $\bZ_2$-symmetric proper spanning subframework of $(G,p)$ is (anti-)~symmetrically infinitesimally rigid.
\end{enumerate}
\end{definition}

Let $G=(V,E)$ be a $\bZ_2$-symmetric graph with $V_0$ the set of vertex orbits and $E_0$ the set of edge orbits. The subset of $E_0$ consisting of edge orbits for edges in $G$ which are not fixed is denoted $E_0'$.

\begin{lemma}
\label{Counts}
Let $(G,p)$ be a well-positioned and $\bZ_2$-symmetric bar-joint framework in $(X,\|\cdot\|)$.  
\begin{enumerate}[label=(\roman*)]
\item 
If $(G,p)$ is symmetrically infinitesimally rigid then,
\[|E_0| \geq (\dim X)|V_0| - \dim \T_1(G,p).\] 
\item 
If $(G,p)$ is anti-symmetrically infinitesimally rigid then,
\[|E_0'| \geq (\dim X)|V_0| - \dim \T_2(G,p).\]
\end{enumerate}
\end{lemma}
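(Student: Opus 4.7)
The plan is to read the two inequalities as direct dimension counts coming from the rank--nullity theorem applied to the block decomposition $df_G(p) = R_1 \oplus R_2$ established in Lemma~\ref{decomp}.

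First I would compute the dimensions of the four spaces appearing in Lemma~\ref{decomp}. Because the action $\theta$ is free on $V$, every vertex orbit has size exactly $2$, so $|V| = 2|V_0|$. A vector $(x_v)_{v\in V} \in X_1$ is determined by the coordinates $x_v$ for one representative $v$ of each vertex orbit (the coordinate $x_{-v}$ is then forced to be $\tau(-1)x_v$), and every such choice gives a valid element of $X_1$. Hence $\dim X_1 = (\dim X)\,|V_0|$, and by the same argument $\dim X_2 = (\dim X)\,|V_0|$. For $Y_1$, each edge orbit $[e]$ contributes exactly one free coordinate $y_e$, regardless of whether $e$ is fixed, so $\dim Y_1 = |E_0|$. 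For $Y_2$, a fixed edge $e = -e$ forces $y_e = -y_e = 0$, while a non-fixed edge orbit contributes one free coordinate; thus $\dim Y_2 = |E_0'|$.

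Next I would combine these dimensions with the definition of $\F_1(G,p)$ and $\F_2(G,p)$. By Lemma~\ref{decomp}(iii), $\F_1(G,p) = \ker R_1$ and $\F_2(G,p) = \ker R_2$ where $R_1\colon X_1 \to Y_1$ and $R_2\colon X_2 \to Y_2$. Applying the rank--nullity theorem together with the trivial bound $\rank R_i \leq \dim Y_i$ gives
\begin{align*}
\dim \F_1(G,p) &\;=\; \dim X_1 - \rank R_1 \;\geq\; (\dim X)|V_0| - |E_0|,\\
\dim \F_2(G,p) &\;=\; \dim X_2 - \rank R_2 \;\geq\; (\dim X)|V_0| - |E_0'|.
\end{align*}

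Finally I would invoke the rigidity hypotheses. If $(G,p)$ is symmetrically infinitesimally rigid then $\F_1(G,p) = \T_1(G,p)$, so substituting into the first inequality and rearranging yields $|E_0| \geq (\dim X)|V_0| - \dim \T_1(G,p)$, which is (i). Likewise, anti-symmetric infinitesimal rigidity gives $\F_2(G,p) = \T_2(G,p)$, which upon substitution in the second inequality yields (ii). No genuine obstacle arises in this argument; the only point that demands care is the dimension of $Y_2$, where one must notice that edges fixed by $\theta$ drop out entirely and produce the refined count $|E_0'|$ rather than $|E_0|$.
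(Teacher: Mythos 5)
Your proof is correct and follows essentially the same route as the paper: identify $\dim X_1=\dim X_2=(\dim X)|V_0|$, $\dim Y_1=|E_0|$, $\dim Y_2=|E_0'|$, observe that $\F_i(G,p)=\ker R_i$ equals $\T_i(G,p)$ under the relevant rigidity hypothesis, and apply rank--nullity with $\rank R_i\leq\dim Y_i$. Your extra care over $\dim Y_2$ (fixed edges forcing $y_e=0$) is exactly the point the paper flags in its parenthetical remark.
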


\proof

Consider the decompositions constructed in Lemma \ref{Decomp}.  
Note that $\dim X_1 = (\dim X)|V_0|$, $\dim X_2 = (\dim X)|V_0|$,
$\dim Y_1 = |E_0|$ and $\dim Y_2 = |E_0'|$.
(In the case of $Y_2$ the dimension is determined by the number of edge orbits for edges which are not fixed).
If $(G,p)$ is symmetrically infinitesimally rigid then $\T_1(G,p)=\F_1(G,p)=\ker R_1$ and so,
\[|E_0| \geq \rank R_1  = 
 (\dim X)|V_0| - \dim \ker R_1 = (\dim X)|V_0| - \dim \T_1(G,p).\]
A similar argument applies if $(G,p)$ is anti-symmetrically infinitesimally rigid.
\endproof

\begin{lemma}
\label{AntiSymFixedEdges}
Let $(G,p)$ be a well-positioned and $\bZ_2$-symmetric bar-joint framework in $(X,\|\cdot\|)$.
If $(G,p)$ is anti-symmetrically isostatic then $G$ contains no fixed edges.
\end{lemma}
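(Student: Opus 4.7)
The plan is to argue by contradiction: assume $(G,p)$ is anti-symmetrically isostatic and contains a fixed edge $e$, and then exhibit a proper $\bZ_2$-symmetric spanning subframework that is still anti-symmetrically infinitesimally rigid, contradicting isostaticity.

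First I would observe that a fixed edge must have the form $e=v(-v)$ for some $v\in V$. Indeed, $e=-e$ means $\{v,w\}=\{-v,-w\}$; since the $\bZ_2$-action is free on $V$, we must have $w=-v$. I would then consider the subgraph $G'=(V,E\setminus\{e\})$. Because $-e=e$, the edge set $E\setminus\{e\}$ is invariant under $\theta$, so $G'$ is a $\bZ_2$-symmetric graph and $(G',p)$ is a proper $\bZ_2$-symmetric spanning subframework of $(G,p)$.

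The key step is to show that the rigidity operator $R_2\colon X_2\to Y_2$ associated with $(G,p)$ via Lemma \ref{Decomp}(iii) is unchanged upon passing to $(G',p)$, i.e.\ that the row of the rigidity matrix indexed by $e$ vanishes identically on $X_2$. Here the decomposition does all the work: by Lemma \ref{Decomp}(iii), for every $u\in X_2$ the image $df_G(p)u$ lies in $Y_2$, and by definition of $Y_2$ its coordinates satisfy $y_{-e'}=-y_{e'}$ for every edge $e'$. Applied to the fixed edge $e=-e$ this forces
\[
(df_G(p)u)_e \;=\; -(df_G(p)u)_{-e} \;=\; -(df_G(p)u)_e,
\]
so $(df_G(p)u)_e=0$. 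Hence deleting the row indexed by $e$ from the rigidity matrix does not change the linear map $R_2$, and the operator $R_2'$ associated with $(G',p)$ coincides with $R_2$.

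From $R_2'=R_2$ it follows that $\ker R_2'=\ker R_2=\F_2(G,p)=\T_2(G,p)$, so $\F_2(G',p)=\T_2(G',p)$ and $(G',p)$ is anti-symmetrically infinitesimally rigid. This contradicts the assumption that $(G,p)$ is anti-symmetrically isostatic, completing the proof. I do not expect any significant obstacles: the only substantive input is the block decomposition of Lemma \ref{Decomp} together with the elementary observation that anti-symmetric vectors in $\bR^{|E|}$ must vanish on fixed edges.
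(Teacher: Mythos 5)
Your proof is correct and follows essentially the same route as the paper: both delete the fixed edge $e=v(-v)$ and use the anti-symmetry relation to show that the corresponding row of the rigidity matrix vanishes identically on $X_2$, so that $\F_2(G-e,p)=\F_2(G,p)$. The only cosmetic difference is that you derive the vanishing from Lemma~\ref{Decomp}(iii) and contradict the minimality clause of isostaticity, whereas the paper redoes the computation for a single flex of $G-e$ and contradicts the rigidity clause.
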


\proof
Suppose $e=v(-v)$ is a fixed edge in $G$ and let $H=G-e$. 
Then there exists a non-trivial anti-symmetric infinitesimal flex $u\in \F_2(H,p)$. Note that $u\in \ker df_H(p)$ and the linear functional $\varphi_{v,-v}$ satisfies,
\[\varphi_{v,-v}(u_v-u_{-v}) = \varphi_{-v,v}(\tau(-1)(u_v-u_{-v}))
=\varphi_{-v,v}(-(u_{-v}-u_{v}))=-\varphi_{-v,v}(u_{-v}-u_{v})
=-\varphi_{v,-v}(u_{v}-u_{-v}).\]
Thus $\varphi_{v,-v}(u_v-u_{-v})=0$
and so, from equation (\ref{OpFormula}), it follows that $u\in \ker df_G(p)$. In particular, $u$ is a non-trivial anti-symmetric infinitesimal flex of $(G,p)$.
\endproof

Let $Z$ and $W$ be linear subspaces of $X$ such that $X=Z\oplus W$ and suppose $W$ has dimension $1$. A linear isometry $T\in\Isom(X,\|\cdot\|)$ is called a {\em reflection} in the mirror $Z$ along $W$ if $T=I-2P$, where  $P:X\to X$ is the linear projection with range $W$ and kernel $Z$.

\begin{lemma}
\label{K2Lemma}
Let $(K_2,p)$ be a placement of $K_2$ in $(X,\|\cdot\|)$
which is $\bZ_2$-symmetric with respect to an action $\theta:\bZ_2\to \Aut(G)$ and a representation $\tau:\bZ_2\to \Isom(X,\|\cdot\|)$.
If $\theta$ acts freely on $V(K_2)$ and $\tau(-1)$ is a reflection then $(K_2,p)$ is symmetrically isostatic.
\end{lemma}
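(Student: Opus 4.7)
The plan is to compute both $\ker R_1$ and the space of symmetric trivial flexes $\T_1(K_2,p)$ directly, check that they coincide, and then rule out any proper spanning subframework being symmetrically rigid. Label the two vertices of $K_2$ as $v$ and $-v$, so that the unique edge $e=v(-v)$ is fixed (since $-e=e$). Write $\tau(-1)=I-2P$ where $P:X\to X$ is the projection onto $W$ with kernel $Z$; then $I-\tau(-1)=2P$ and $I+\tau(-1)=2(I-P)$. The symmetry condition $\tau(-1)p_v=p_{-v}$ forces $p_v-p_{-v}=(I-\tau(-1))p_v=2Pp_v$ to lie in the one-dimensional subspace $W$, and it is nonzero since $(K_2,p)$ is a bar-joint framework.

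First I would identify $\ker R_1$. For $u\in X_1$ the constraint $u_{-v}=\tau(-1)u_v$ gives $u_v-u_{-v}=2Pu_v\in W$, so by equation (\ref{OpFormula}) we have $R_1(u)=\varphi_{v,-v}(2Pu_v)$. Since $W$ is one-dimensional, every vector in $W$ is a scalar multiple of the nonzero vector $p_v-p_{-v}\in W$, and the identity $\varphi_{v,-v}(p_v-p_{-v})=\|p_v-p_{-v}\|\neq 0$ (immediate from positive homogeneity of the norm) shows that $\varphi_{v,-v}$ restricts to a nonzero linear functional on $W$. Consequently $R_1(u)=0$ if and only if $Pu_v=0$, i.e.\ $u_v\in Z$, and so $\F_1(K_2,p)=\ker R_1$ has dimension $\dim Z=\dim X-1$.

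Next I would exhibit enough symmetric trivial flexes. Translations are rigid motions in any normed space, so for every $y\in X$ the constant vector $(y,y)$ lies in $\T(K_2,p)$; it belongs to $X_1$ precisely when $\tau(-1)y=y$, i.e.\ $y\in Z$. This yields a subspace of $\T_1(K_2,p)$ of dimension $\dim Z=\dim X-1$. Combined with the inclusion $\T_1(K_2,p)\subseteq\F_1(K_2,p)$ and the dimension count above, we obtain $\T_1(K_2,p)=\F_1(K_2,p)$, so $(K_2,p)$ is symmetrically infinitesimally rigid. For isostaticity, the only proper $\bZ_2$-symmetric spanning subframework is $(K_2-e,p)$, whose space of symmetric infinitesimal flexes is all of $X_1$, of dimension $\dim X>\dim X-1=\dim\T_1(K_2,p)$; hence this subframework is not symmetrically infinitesimally rigid.

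The only genuinely delicate point is showing that $\varphi_{v,-v}|_W$ is nonzero, which reduces to the two observations that $p_v-p_{-v}$ is a nonzero element of the one-dimensional space $W$ and that the directional derivative of $\|\cdot\|$ at a point along itself equals the norm; everything else is a direct dimension comparison against the count of Lemma \ref{Counts}(i), which in this case reads $1\geq\dim X-(\dim X-1)=1$ and so is forced to be tight.
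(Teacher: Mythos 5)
Your proof is correct and follows essentially the same route as the paper's: both arguments reduce to observing that a symmetric flex satisfies $u_v-u_{-v}=2Pu_v\in W$ and that $\varphi_{v,-v}$ is nonzero on the one-dimensional space $W$ because $\varphi_{v,-v}(p_v-p_{-v})=\|p_v-p_{-v}\|\neq 0$, which forces $u_v\in Z$ and hence $u_{-v}=u_v$. Your version merely packages this as a dimension count and additionally makes explicit the isostatic clause (that $(K_2-e,p)$ is not symmetrically rigid), which the paper's proof leaves implicit.
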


\proof
Let $v$ and $-v$ be the vertices of $K_2$ and let $u\in \F_1(K_2,p)$ be a symmetric infinitesimal flex of $(K_2,p)$.
The isometry $\tau(-1)$ has the form $\tau(-1)=I-2P$ where $P$ is a projection as described above.
Note that, 
\[\varphi_{v,-v}(Pu_v) =\frac{1}{2}\varphi_{v,-v}((I-\tau(-1))u_v)
= \frac{1}{2}\varphi_{v,-v}(u_v-u_{-v}) =
0.\]  
Thus $u_v\in Z$ or $W\subset \ker\varphi_{v,-v}$.
Note that $p_v-p_{-v} = (I-\tau(-1))p_v =2P(p_v)\in W$.
Thus if $W\subset \ker \varphi_{v,-v}$ then,
\[\|p_v-p_{-v}\| = \varphi_{v,-v}(p_v-p_{-v})=0,\]
and so $p_v=p_{-v}$ which is a contradiction.
We conclude that  $u_v\in Z$ and so $u_{-v}=\tau(-1)u_v=u_v$.
Thus $u$ is a trivial infinitesimal flex.
\endproof

\subsection{Signed quotient graphs} 
\label{sec:basicdef}
The {\em quotient graph} $G_0=G/\bZ_2$  for a $\bZ_2$-symmetric graph $(G,\theta)$ has vertex set $V_0$ consisting of the vertex orbits for $(G,\theta)$ and edge set $E_0$ consisting of the edge orbits. An edge $[e]\in E_0$ is regarded as incident to a vertex $[v]\in V_0$ if $e$ (equivalently, $-e$) is incident to either $v$ or $-v$ in $G$. 
In general, $G_0$ is not a simple graph as if $e\in E$ is a fixed edge in $G$ then $[e]$ is a loop in $G_0$. Also, if $e=vw$ and $e'=v(-w)$ are distinct edges in $G$ then $[e]$ and $[e']$ are parallel edges in $G_0$.

Let $\tilde{V}_0=\{\tilde{v}_1,\ldots,\tilde{v}_n\}$ be a choice of representatives for the vertex orbits of $(G,\theta)$.
A {\em signed quotient graph} (or \emph{quotient $\mathbb{Z}_2$-gain graph} \cite{jkt,schtan}) is a pair $(G_0,\psi)$ consisting of a quotient graph $G_0$ and an edge-labeling (or \emph{gain}) $\psi:E_0\to \bZ_2$ where $\psi([e])=1$ if either $e$ or $-e$ is incident to two vertices in $\tilde{V}_0$ and $\psi([e])=-1$ otherwise. See Figure~\ref{fig:gaingraph} for an example.

In the following, $G$ will be referred to as the \emph{covering graph} of $(G_0,\psi)$ and, to simplify notation, $\psi([e])$ will be denoted $\psi_{[e]}$.
Note that the covering graph is required to be a simple graph and so signed quotient graphs are characterised by the following two properties.
\begin{enumerate}
\item If two edges $[e]$ and $[e']$ in $G_0$ are parallel then $\psi_{[e]}\not=\psi_{[e']}$.
\item If $[e]$ is a loop in $G_0$ then $\psi_{[e]}=-1$.
\end{enumerate}

The {\em gain} of a set of edges $F$ in a signed quotient graph $(G_0,\psi)$ is defined as the product $\psi(F) =\Pi_{[e]\in F} \,\psi_{[e]}$.
A set of edges $F$ is {\em balanced} if it does not contain a cycle of edges, or,  has the property that every cycle of edges in $F$ has gain $1$. A subgraph of $G_0$ is {\em balanced} in $(G_0,\psi)$  if it is spanned by a balanced set of edges, otherwise, the subgraph is {\em unbalanced}. (See also \cite{jkt,zas,zas1}).

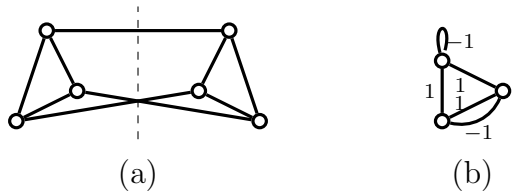
\begin{figure}[htp]
\begin{center}
\begin{tikzpicture}[very thick,scale=0.8]
\tikzstyle{every node}=[circle, draw=black, fill=white, inner sep=0pt, minimum width=5pt];

\node (p1) at (0,0) {};
\node (p2) at (4,0) {};
\node (p1o) at (1,0.5) {};
\node (p2o) at (3,0.5) {};
\node (p1l) at (0.5,1.5) {};
\node (p2r) at (3.5,1.5) {};
\draw(p1)--(p1o);
\draw(p2)--(p2o);
\draw(p1)--(p2o);
\draw(p2)--(p1o);
\draw(p1l)--(p1o);
\draw(p1l)--(p2r);
\draw(p1l)--(p1);
\draw(p2r)--(p2o);
\draw(p2r)--(p2);
\draw[dashed,thin](2,-0.3)--(2,1.9);
   \node [draw=white, fill=white] (a) at (2,-0.9)  {(a)};

 \node [draw=white, fill=white] (a) at (6.8,0.5)  {\tiny $1$};
 \node [draw=white, fill=white] (a) at (7.3,0.6)  {\tiny $1$};

 \node [draw=white, fill=white] (a) at (7.29,0.3)  {\tiny $1$};
 \node [draw=white, fill=white] (a) at (7.3,1.3)  {\tiny $-1$};

 \node [draw=white, fill=white] (a) at (7.6,-0.2)  {\tiny $-1$};

\node(p11) at (7,0) {};
\node(p22) at (7,1) {};
\node (p33) at (8,0.5) {};
\draw(p22)--(p11);
\draw(p22)--(p33);
\draw(p33)--(p11);
\path(p11) edge [bend right=40] (p33);
\path(p22) edge [loop above, >=stealth,shorten >=1pt,looseness=20] (p22);
 \node [draw=white, fill=white] (a) at (7.5,-0.9)  {(b)};
\end{tikzpicture}
\end{center}
\vspace{-0.2cm}
\caption{A $\mathbb{Z}_2$-symmetric graph $(G,\theta)$, where $\theta$ describes the reflectional symmetry  shown in (a) and a corresponding signed quotient graph $(G_0,\psi)$ (b).}
\label{fig:gaingraph}
\end{figure}

\begin{lemma}[\cite{jkt,zas}] \label{switch}
Let $(G_0,\psi)$ be a signed quotient graph for a $\bZ_2$-symmetric graph $(G,\theta)$ and let $H_0$ be a balanced subgraph in $(G_0,\psi)$.
Then, 
\begin{enumerate}[label=(\roman*)]
\item
$H_0$ is a balanced subgraph in $(G_0,\psi')$ for every gain $\psi'$ induced by a choice of vertex orbit representatives for $(G,\theta)$, and,
\item
there exists a choice of vertex orbit representatives $\tilde{V}_0$  for $(G,\theta)$ such that the induced gain $\psi'$ satisfies $\psi'_{[e]}=1$ for all $[e]\in E(H_0)$.
\end{enumerate}
\end{lemma}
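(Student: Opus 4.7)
My plan is to reduce both parts to a single \emph{switching rule} describing how $\psi$ transforms when a single vertex orbit representative is flipped, and then apply this rule in a standard way. The rule states: if $\tilde{V}_0' = (\tilde{V}_0 \setminus \{\tilde{v}_0\}) \cup \{-\tilde{v}_0\}$ and $\psi'$ denotes the gain induced by $\tilde{V}_0'$, then $\psi'_{[e]} = -\psi_{[e]}$ for every non-loop edge $[e]$ of $G_0$ incident to $[\tilde{v}_0]$, while $\psi'_{[e]} = \psi_{[e]}$ for all other edges. The verification is a direct case analysis using the definition of $\psi$: for a non-loop $[e] = [\tilde{v}_0 w]$ with $[w] \neq [\tilde{v}_0]$, split on whether $w = \tilde{w}$ or $w = -\tilde{w}$; in either case exactly one of the two endpoint-pairs $\{\tilde{v}_0, w\}, \{-\tilde{v}_0, -w\}$ lies in $\tilde{V}_0 \times \tilde{V}_0$, and this is reversed after the switch. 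For a loop $[e] = [\tilde{v}_0(-\tilde{v}_0)]$, one endpoint lies outside the representative set under either choice, so the gain is $-1$ in both cases.

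For part (i), any two choices of representatives differ by a sequence of single-vertex switches, so it suffices to show that one switch at $[\tilde{v}_0]$ preserves balance. For any cycle $C$ in $H_0$, the degree of $[\tilde{v}_0]$ in $C$ is even, and each loop at $[\tilde{v}_0]$ contributes $2$ to that degree. Hence the number of non-loop edges of $C$ incident to $[\tilde{v}_0]$ is also even, and by the switching rule $\psi(E(C))$ is multiplied by an even power of $-1$. All cycle gains are therefore preserved, and balance of $H_0$ transfers to $(G_0,\psi')$.

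For part (ii), observe first that a balanced $H_0$ contains no loops (a loop is by itself an unbalanced cycle of gain $-1$) and no parallel edges (which span an unbalanced $2$-cycle, since parallel edges carry opposite gains by the defining axioms of a signed quotient graph). Choose a spanning forest $T$ of $H_0$ and root each component arbitrarily. Process the non-root vertices of $T$ in breadth-first order, performing a switch at $[v]$ exactly when the current gain of the tree edge from $[v]$ to its parent equals $-1$. Each such switch only flips gains on edges incident to $[v]$; the parent edge becomes $+1$, while any other tree edges affected run from $[v]$ to as-yet-unprocessed descendants, so the invariant that every processed tree edge has gain $+1$ is maintained. Upon termination, every edge of $T$ has gain $+1$. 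For any non-tree edge $[e]$ of $H_0$, the fundamental cycle formed with $T$ has gain $+1$ by part (i), and since all of its tree edges contribute $+1$, so must $[e]$. The only step requiring genuine care is the switching rule; once it is in place both parts follow by standard arguments from gain graph theory.
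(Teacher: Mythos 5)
Your proof is correct: the single-vertex switching rule is verified accurately from the paper's definition of the induced gain, the parity argument for cycle gains gives (i), and the spanning-forest/BFS switching procedure together with the fundamental-cycle observation gives (ii). The paper itself supplies no proof of this lemma, citing it from \cite{jkt,zas}; your argument is the standard one used in those references, so there is nothing to compare beyond noting agreement.
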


A subgraph of $G_0$ will be referred to as balanced if it is balanced in $(G_0,\psi)$ for some (and hence every) gain $\psi$ induced by a choice of vertex orbit representatives.

\begin{definition}
A  subgraph of $G_0$ for which every connected component contains exactly one cycle, each of which is unbalanced, is called an {\em unbalanced  map graph} in $G_0$. 
\end{definition}

If a representative vertex $\tv$ is replaced by the vertex $-\tv$, then  a new  signed quotient graph $(G_0,\psi')$ is obtained, where $\psi'_{[e]}=-\psi_{[e]}$ if $[e]$ is incident with $[v]$, and $\psi'_{[e]}=\psi_{[e]}$ otherwise. 
This is referred to as a \emph{switching operation} on $[v]$.

\subsection{Orbit matrices and sparsity counts}
Let  $(G,p)$ be a well-positioned and $\bZ_2$-symmetric bar-joint framework  in $(X,\|\cdot\|)$ and let $\tilde{V}_0$ be a choice of vertex orbit representatives.

\begin{definition}
A {\em symmetric orbit matrix} for $(G,p)$ is a matrix of linear functionals on $X$, denoted $O_1(G,p)$ or simply $O_1$, with rows indexed by  $E_0$ and columns indexed by $V_0$. 

The matrix entry for a pair $([e],[v])\in E_0\times V_0$ is given by,
\[O_1([e],[v])=
\left\{
\arraycolsep=2pt\def\arraystretch{1.4}
\begin{array}{ll}
\varphi_{\tilde{v},\psi_{[e]}\tilde{w}} 
& \mbox{ if }[e]=[vw] \mbox{ and } [e] \mbox{ is not a loop},\\

2\varphi_{\tilde{v},-\tilde{v}}

& \mbox{ if }[e] \mbox{ is a loop at } [v],\\

0 & \mbox{ otherwise,}
\end{array}\right.\]
where $\tilde{v},\tilde{w}\in \tilde{V}_0$ are the representative vertices for $[v]$ and $[w]$ respectively and  $\psi$ is the gain on $G_0$ induced by $\tilde{V}_0$.
\end{definition}

Each symmetric orbit  matrix determines a linear map $O_1(G,p):X^{|V_0|}\to \bR^{|E_0|}$. Explicitly, the row entries of $O_1(G,p)$ which correspond to an edge orbit $[e]=[vw]$ which is not a loop are,
\[
\kbordermatrix{
& & & & \lbrack v \rbrack & & & & \lbrack w \rbrack & & & \\
\lbrack e \rbrack &  0 &\cdots\,& 0 & \varphi_{\tilde{v},\psi_{[e]}\tilde{w}} &0 &\cdots
\cdots &0&\varphi_{\tilde{w},\psi_{[e]}\tilde{v}}&0&\cdots &0},\]
while if $[e]$ is a loop at a vertex $[v]$ then the row entries are, 
\[\kbordermatrix{ 
&&&&\lbrack v \rbrack &&& \\
\lbrack e \rbrack & 0 &\,\cdots\,& 0& \,\,\,\, 2\varphi_{\tilde{v},-\tilde{v}}
\,\,\,\,&0&\,\cdots\, &0}.\]

\begin{lemma}
\label{CommDiagramLemma1}
Let $(G,p)$ be a well-positioned and $\bZ_2$-symmetric bar-joint framework in $(X,\|\cdot\|)$. If $O_1$ is a symmetric orbit matrix for $(G,p)$ then there exist linear isomorphisms, 
\[S_1:X^{|V_0|}\to X_1, \qquad T_1:\bR^{|E_0|}\to Y_1,\]
such that the following diagram commutes.
\[
\begin{tikzcd}
X^{|V_0|} \arrow{r}{O_1} \arrow[swap]{d}{S_1} & \bR^{|E_0|} \arrow{d}{T_1}  \\
X_1  \arrow{r}{R_1} & Y_1
\end{tikzcd}
\]
In particular, $R_1$ and $O_1$ are (isomorphically) equivalent linear transformations.
\end{lemma}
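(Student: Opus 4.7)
The plan is to construct $S_1$ and $T_1$ directly from the orbit data and then verify the commuting square one edge orbit at a time, using two basic symmetry identities for the row functionals $\varphi_{v,w}$.

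First, I would define the vertex isomorphism. For each vertex orbit $[v]\in V_0$ there is a chosen representative $\tilde v\in\tilde V_0$. Set
\[S_1\colon X^{|V_0|}\to X_1,\qquad S_1\bigl((x_{[v]})_{[v]\in V_0}\bigr)=(u_v)_{v\in V},\]
where $u_{\tilde v}=x_{[v]}$ and $u_{-\tilde v}=\tau(-1)x_{[v]}$. The defining relation of $X_1$ makes $S_1$ well defined, and the inverse is restriction to the representatives; so $S_1$ is a linear isomorphism. For the edge side, set
\[T_1\colon\bR^{|E_0|}\to Y_1,\qquad (T_1 z)_e=z_{[e]}\ \text{for every }e\in E.\]
Since the defining relation of $Y_1$ is $y_e=y_{-e}$, this is also a linear isomorphism.

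Next, I would record two identities for the row functionals at a well-positioned and $\bZ_2$-symmetric placement $p$:
\begin{enumerate}[label=(\alph*)]
\item $\varphi_{w,v}=-\varphi_{v,w}$, obtained from $\|z\|=\|-z\|$ and the fact that the differential of the norm at $-z$ is the negative of the differential at $z$;
\item $\varphi_{v,w}(\tau(-1)x)=\varphi_{-v,-w}(x)$ for all $x\in X$, obtained from $\tau(-1)$ being a linear isometry together with $p_{-v}-p_{-w}=\tau(-1)(p_v-p_w)$ and $\tau(-1)^2=I$.
\end{enumerate}
Identity (b) is already noted in the proof of Lemma~\ref{decomp}, and (a) follows from the standard fact that the rigidity operator entries at opposite vertices differ by a sign.

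The remaining task is to verify $T_1\circ O_1=R_1\circ S_1$ by comparing the entry indexed by the representative edge $e$ in each edge orbit $[e]$. Write $u=S_1((x_{[v]})_{[v]})$ and split into the three cases allowed by the definition of $\psi$.

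For a non-loop edge orbit with $\psi_{[e]}=1$, the representative is $e=\tilde v\tilde w$ with $\tilde v,\tilde w\in\tilde V_0$, so $u_{\tilde v}=x_{[v]}$ and $u_{\tilde w}=x_{[w]}$, and (a) gives
\[(R_1S_1x)_e=\varphi_{\tilde v,\tilde w}(x_{[v]}-x_{[w]})=\varphi_{\tilde v,\tilde w}(x_{[v]})+\varphi_{\tilde w,\tilde v}(x_{[w]}),\]
which is exactly $(O_1x)_{[e]}$. For $\psi_{[e]}=-1$ and $[v]\neq[w]$, the representative is $e=\tilde v(-\tilde w)$, so $u_{-\tilde w}=\tau(-1)x_{[w]}$; applying (b) to rewrite $\varphi_{\tilde v,-\tilde w}(\tau(-1)x_{[w]})=\varphi_{-\tilde v,\tilde w}(x_{[w]})$ and then (a) to turn this into $-\varphi_{\tilde w,-\tilde v}(x_{[w]})$ reproduces $(O_1 x)_{[e]}$. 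For the loop case $e=\tilde v(-\tilde v)$, the same two identities give $\varphi_{\tilde v,-\tilde v}(\tau(-1)x_{[v]})=-\varphi_{\tilde v,-\tilde v}(x_{[v]})$, hence
\[(R_1S_1x)_e=\varphi_{\tilde v,-\tilde v}\bigl((I-\tau(-1))x_{[v]}\bigr)=2\varphi_{\tilde v,-\tilde v}(x_{[v]}),\]
which matches the loop entry of $O_1$ and explains the factor of $2$ there.

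The only real bookkeeping is the gain-$(-1)$ non-loop case, where (a) and (b) must be applied in the right order; the loop case and the balanced case are essentially direct computations. Once the three cases are checked, the diagram commutes and the isomorphic equivalence of $R_1$ and $O_1$ follows.
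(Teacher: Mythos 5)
Your proposal is correct and follows essentially the same route as the paper's proof: the maps $S_1$ and $T_1$ you define coincide with the paper's (which writes $S_1$ uniformly as $(x_{[v]})\mapsto(\tau(\gamma_v)x_{[v]})_{v\in V}$ with $v=\gamma_v\tilde v$), and the verification rests on the same two identities $\varphi_{w,v}=-\varphi_{v,w}$ and $\varphi_{v,w}\circ\tau(-1)=\varphi_{-v,-w}$, checked orbit by orbit. The only cosmetic difference is that the paper treats the two non-loop gain cases in a single computation parametrised by $\psi_{[e]}$ rather than splitting them.
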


\proof
Let $\tilde{V}_0$ be the choice of vertex orbit representatives from which $O_1(G,p)$ is derived.
Each vertex $v\in V$ is expressible in the form $v=\gamma_v \tilde{v}$ for some $\gamma_v\in\bZ_2$ where $\tilde{v}\in\tilde{V}_0$ is the chosen representative for $[v]$.
Define,
\[S_1:X^{|V_0|}\to X_1, \,\,\,\,\,\, 
(x_{[v]})_{[v]\in V_0}\mapsto 
(\tau(\gamma_v)x_{[v]})_{v\in V},\]
\[T_1:\bR^{|E_0|}\to Y_1, \,\,\,\,\,\, 
(y_{[e]})_{[e]\in E_0}\mapsto 
(y_{[e]})_{e\in E}.\]
Let $u=(u_{[v]})_{[v]\in V_0}\in X^{|V_0|}$.
It is sufficient to compare the entries of $(T_1\circ O_1)u$ and
$(R_1\circ S_1)u$ in $Y_1$ (note that these entries are indexed by $E$). 

Suppose $e=vw\in E$ is an edge in $G$ which is not fixed. Then the edge orbit $[e]$ is not a loop in the quotient graph $G_0$ 
and so the entry of $O_1(u)$ corresponding to $[e]$ is given by,
\begin{eqnarray*}
\label{RigidityMatrixEqn}
\varphi_{\tilde{v},\psi_{[e]}\tilde{w}}(u_{[v]})-\varphi_{\psi_{[e]}\tilde{v},\tilde{w}}(u_{[w]})
&=& \varphi_{\tilde{v},\psi_{[e]}\tilde{w}}(u_{[v]})-\varphi_{\tilde{v},\psi_{[e]}\tilde{w}}(\tau(\psi_{[e]})u_{[w]}) \\
&=& \varphi_{\tilde{v},\psi_{[e]}\tilde{w}}(u_{[v]} - \tau(\psi_{[e]})u_{[w]}).
\end{eqnarray*}
This is also the entry of $(T_1\circ O_1)u$ corresponding to $e$.
Note that $e=(\gamma_v \tilde{v})(\gamma_w \tilde{w})$ where $\psi_{[e]}=\gamma_v\gamma_w$.
Thus, the entry of $(R_1\circ S_1)u$ corresponding to $e$ is,
\begin{eqnarray*}
\varphi_{v,w}(\tau(\gamma_v)u_{[v]}
-\tau(\gamma_w)u_{[w]})
&=& \varphi_{\gamma_v \tilde{v},\gamma_w \tilde{w}}(\tau(\gamma_v)u_{[v]}
-\tau(\gamma_w)u_{[w]}) \\
&=& \left(\varphi_{\tilde{v},\psi_{[e]}\tilde{w}}\circ\tau(\gamma_v)\right)(\tau(\gamma_v)u_{[v]}
-\tau(\gamma_w)u_{[w]}) \\
&=&\varphi_{\tilde{v},\psi_{[e]}\tilde{w}}(u_{[v]}
-\tau(\psi_{[e]})u_{[w]}).
\end{eqnarray*}

Now suppose $e=\tilde{v}(-\tilde{v})\in E$ is a fixed edge in $G$. The edge orbit $[e]$ is a loop in the quotient graph and so the entry of $(T_1\circ O_1)u$ corresponding to $e$ is $2\varphi_{\tilde{v},-\tilde{v}}(u_{[\tilde{v}]})$. Likewise, the entry of $(R_1\circ S_1)u$ corresponding to $e$ is,
\begin{eqnarray*}
\label{RigidityMatrixEqn2}
\varphi_{\tilde{v},-\tilde{v}}(u_{[\tilde{v}]}-\tau(-1)u_{[\tilde{v}]})
&=& \varphi_{\tilde{v},-\tilde{v}}(u_{[\tilde{v}]})+\varphi_{-\tilde{v},\tilde{v}}(\tau(-1)u_{[\tilde{v}]}) \\
&=& \varphi_{\tilde{v},-\tilde{v}}(u_{[\tilde{v}]}) + \varphi_{\tilde{v},-\tilde{v}}(u_{[\tilde{v}]}) \\
&=& 2\varphi_{\tilde{v},-\tilde{v}}(u_{[\tilde{v}]}).
\end{eqnarray*}
\endproof

Consider again a $\bZ_2$-symmetric bar-joint framework $(G,p)$ and fix an orientation on the edges of the quotient graph which lie in $E_0'$ (i.e. the edges in $G_0$ which are not loops).

\begin{definition}
An {\em anti-symmetric orbit matrix} for $(G,p)$ is a matrix of linear functionals on $X$, denoted $O_2(G,p)$ or $O_2$, with rows indexed by $E_0'$ and columns indexed by $V_0$. 

The matrix entry for a pair $([e],[v])\in E_0'\times V_0$ is given by,
\[O_2([e],[v])=
\left\{
\arraycolsep=2pt\def\arraystretch{1.4}
\begin{array}{ll}
\varphi_{\tilde{v},\psi_{[e]}\tilde{w}} 
& \mbox{ if }[e]=[vw] \mbox{ and } [e] \mbox{ is oriented from } $[v]$ \mbox{ to }$[w]$,\\

\psi_{[e]}\varphi_{\tilde{v},\psi_{[e]}\tilde{w}} 
& \mbox{ if }[e]=[vw] \mbox{ and } [e] \mbox{ is oriented from } $[w]$ \mbox{ to }$[v]$,\\

0 & \mbox{ otherwise,}
\end{array}\right.\]
where $\tilde{v},\tilde{w}\in \tilde{V}_0$ are the representative vertices for $[v]$ and $[w]$ respectively and  $\psi$ is the gain on $G_0$ induced by $\tilde{V}_0$.
\end{definition}

The row entries of $O_2(G,p)$  corresponding to an edge orbit $[e]$  oriented from $[v]$ to $[w]$ are,
\[
\kbordermatrix{
& & & & \lbrack v \rbrack & & & & \lbrack w \rbrack & & & \\
\lbrack e\rbrack& 0 &\cdots& 0& \varphi_{\tilde{v},\psi_{[e]}\tilde{w}}&0&\cdots
\cdots &0&\psi_{[e]}\,\varphi_{\tilde{w},\psi_{[e]}\tilde{v}}
&0&\cdots &0 }.\]

\begin{lemma}
Let $(G,p)$ be a well-positioned and $\bZ_2$-symmetric bar-joint framework in $(X,\|\cdot\|)$. If $O_2$ is an anti-symmetric orbit  matrix for $(G,p)$ then there exist linear isomorphisms, 
\[S_2:X^{|V_0|}\to X_2,\qquad \,T_2:\bR^{|E_0'|}\to Y_2\] 
such that the following diagram commutes.
\[
\begin{tikzcd}
X^{|V_0|} \arrow{r}{O_2} \arrow[swap]{d}{S_2} & \bR^{|E_0'|} \arrow{d}{T_2}  \\
X_2  \arrow{r}{R_2} & Y_2
\end{tikzcd}
\]
In particular, $R_2$ and $O_2$ are (isomorphically) equivalent linear transformations.
\end{lemma}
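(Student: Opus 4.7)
My plan is to mirror the proof of the preceding Lemma \ref{CommDiagramLemma1}, constructing the anti-symmetric analogues $S_2$ and $T_2$ of the isomorphisms used there and verifying entry-by-entry that the square commutes. Retain the notation $v=\gamma_v\tilde v$ for $v\in V$, with $\tilde v\in\tilde{V}_0$ the chosen representative of $[v]$. The natural candidates are
\[S_2:X^{|V_0|}\to X_2,\qquad (x_{[v]})_{[v]\in V_0}\mapsto\bigl(\gamma_v\,\tau(\gamma_v)\,x_{[v]}\bigr)_{v\in V},\]
and $T_2:\bR^{|E_0'|}\to Y_2$ defined as follows. For each oriented orbit $[e]\in E_0'$ running from $[v]$ to $[w]$, let $e_0=\tilde v\,(\psi_{[e]}\tilde w)$ be the unique representative in $E$ whose tail lies in $\tilde{V}_0$, and set $(T_2y)_{e_0}=y_{[e]}$, $(T_2y)_{-e_0}=-y_{[e]}$. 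The identities $\gamma_{-v}=-\gamma_v$ and $\tau(-1)^2=I$ place $S_2(x)$ in $X_2$, the sign convention for $T_2$ is forced by $y_{-e}=-y_e$ in $Y_2$, and both maps are injective (by evaluation at the representative vertices, respectively at the canonical edges) and therefore isomorphisms by the dimension counts $\dim X_2=(\dim X)|V_0|$ and $\dim Y_2=|E_0'|$ already recorded in the proof of Lemma \ref{Counts}.

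For commutativity, I would take an arbitrary $u=(u_{[v]})_{[v]\in V_0}\in X^{|V_0|}$ and compare the entries of $(R_2\circ S_2)u$ and $(T_2\circ O_2)u$ over $E$. Writing $e=vw$ with $v=\gamma_v\tilde v$ and $w=\gamma_w\tilde w$ (so $\psi_{[e]}=\gamma_v\gamma_w$), and using the two elementary identities $\varphi_{a,b}\circ\tau(\gamma)=\varphi_{\gamma a,\gamma b}$ (from $\tau(-1)$ being an involutive linear isometry) together with $\varphi_{a,b}=-\varphi_{b,a}$ (from linearity of $\varphi$ at a well-positioned edge), a direct expansion yields
\[(R_2S_2u)_e=\gamma_v\,\varphi_{\tilde v,\psi_{[e]}\tilde w}(u_{[v]})+\gamma_w\,\varphi_{\tilde w,\psi_{[e]}\tilde v}(u_{[w]}).\]
On the other side, the definition of $O_2$ gives
\[(O_2u)_{[e]}=\varphi_{\tilde v,\psi_{[e]}\tilde w}(u_{[v]})+\psi_{[e]}\,\varphi_{\tilde w,\psi_{[e]}\tilde v}(u_{[w]}),\]
and the sign rule for $T_2$ gives $(T_2O_2u)_e=\gamma_v\,(O_2u)_{[e]}$. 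Since $\gamma_v\psi_{[e]}=\gamma_v^2\gamma_w=\gamma_w$, the two sides agree. No loop case needs to be separated since $O_2$ is indexed by $E_0'$ rather than $E_0$, in accordance with Lemma \ref{AntiSymFixedEdges}.

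The main source of friction I anticipate is exactly this sign bookkeeping: the orientation used to define $O_2$, the sign convention built into $T_2$, and the factor $\psi_{[e]}$ appearing in the reversed column of $O_2$ must interlock consistently. Once the two transformation identities for $\varphi$ are in hand, the verification collapses to the single identity $\gamma_v\psi_{[e]}=\gamma_w$, and the remaining bijectivity claims follow by the same dimension argument as in Lemma \ref{CommDiagramLemma1}.
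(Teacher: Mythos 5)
Your construction of $S_2$ and $T_2$ coincides with the paper's (the paper writes $T_2$ as $(y_{[e]})\mapsto(\gamma_e y_{[e]})_{e\in E}$ with $\gamma_e=\gamma_v$ for $[e]$ oriented from $[v]$ to $[w]$, which is exactly your tail-representative sign rule), and your entry-by-entry verification via $\varphi_{\gamma a,\gamma b}=\varphi_{a,b}\circ\tau(\gamma)$, $\varphi_{a,b}=-\varphi_{b,a}$ and $\gamma_v\psi_{[e]}=\gamma_w$ is precisely the computation the paper leaves as ``analogous to Lemma \ref{CommDiagramLemma1}''. The only point worth adding is that the fixed-edge coordinates of both $(T_2\circ O_2)u$ and $(R_2\circ S_2)u$ vanish automatically (any element of $Y_2$ is zero on fixed edges), so nothing is lost by restricting attention to $E_0'$.
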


\proof
Each vertex $v\in V$ is expressible in the form $v=\gamma_v \tilde{v}$ for some $\gamma_v\in\bZ_2$ where $\tilde{v}\in\tilde{V}_0$ is the chosen representative for $[v]$.
For each edge $e=vw\in E$ which is not fixed, 
define $\gamma_e=\gamma_v$ if $[e]$ is oriented from $[v]$ to $[w]$. Also define,
\[S_2:X^{|V_0|}\to X_2, \,\,\,\,\,\, 
(x_{[v]})_{[v]\in V_0}\mapsto 
(\gamma_v\tau(\gamma_v)x_{[v]})_{v\in V},\]
\[T_2:\bR^{|E_0'|}\to Y_2, \,\,\,\,\,\, 
(y_{[e]})_{[e]\in E_0'}\mapsto 
(\gamma_e y_{[e]})_{e\in E},\]
where, in the definition of $T_2$,  we formally set $\gamma_e y_{[e]}=0$ if $e$ is a fixed edge of $G$. 
The commutativity of the diagram can now be verified in a manner analogous to the proof of Lemma \ref{CommDiagramLemma1}.
\endproof

Let $(H,p)$ be a $\bZ_2$-symmetric framework.
If $H_0$ is balanced then, by Lemma \ref{switch}, there exists a  choice of vertex orbit representatives $\tilde{V}_0$ such that the induced gain is identically $1$ on the edges of $H_0$.
It follows that $H_0$ may be identified with the vertex-induced subgraph on $\tilde{V}_0$ in $H$.
With this identification, $(H_0,p)$ is a well-defined subframework of $(H,p)$.

\begin{lemma} \label{lem:symcounts}
Let $(G,p)$ be a well-positioned and $\bZ_2$-symmetric bar-joint framework in $(X,\|\cdot\|)$ and let $(H,p)$ be a $\bZ_2$-symmetric subframework of $G$.  
\begin{enumerate}[label=(\roman*)]
\item 
If $(G,p)$ is symmetrically isostatic then,
\[|E(H_0)| \leq (\dim X)|V(H_0)| - \dim \T_1(H,p),\]
and if $H_0$ is balanced in $G_0$ then,
\[|E(H_0)| \leq (\dim X)|V(H_0)| - \dim \T(H_0,p).\]
\item 
If $(G,p)$ is anti-symmetrically isostatic then,
\[|E(H_0)| \leq (\dim X)|V(H_0)| - \dim \T_2(H,p),\]
and if $H_0$ is balanced in $G_0$ then,
\[|E(H_0)| \leq (\dim X)|V(H_0)| - \dim \T(H_0,p).\]
\end{enumerate}
\end{lemma}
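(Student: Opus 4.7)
The plan is to use the orbit-matrix identification of Lemma \ref{CommDiagramLemma1} (and its anti-symmetric analogue) to convert the counting inequality into a rank statement for a row submatrix of the orbit matrix. The key observation is that symmetric isostaticity of $(G,p)$ forces the rows of $O_1(G,p)$ to be linearly independent. Indeed, trivial infinitesimal flexes depend only on $p$ and not on the edge set, so $\T_1(G-[e],p)=\T_1(G,p)$ for every $[e]\in E_0$. Symmetric isostaticity then requires $\F_1(G-[e],p)\supsetneq \T_1(G,p)=\ker R_1$, which means deleting the row of $R_1$ (equivalently, of $O_1(G,p)$, by Lemma \ref{CommDiagramLemma1}) indexed by $[e]$ strictly decreases the rank. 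Hence $O_1(G,p)$ has full row rank. An analogous argument, using the anti-symmetric diagram together with Lemma \ref{AntiSymFixedEdges} (to ensure $E_0'=E_0$ and that $G_0$ has no loops), shows $O_2(G,p)$ has full row rank when $(G,p)$ is anti-symmetrically isostatic.

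For a $\bZ_2$-symmetric subframework $(H,p)$, the orbit matrix $O_1(H,p)$ coincides with the submatrix of $O_1(G,p)$ on the rows indexed by $E(H_0)$ and the columns indexed by $V(H_0)$ (columns outside $V(H_0)$ vanish on those rows because $H$ is a subgraph of $G$). Row-independence is thus inherited. Applying Lemma \ref{CommDiagramLemma1} to $(H,p)$ and rank-nullity, I obtain
\[
|E(H_0)|=\rank O_1(H,p)=(\dim X)|V(H_0)|-\dim \F_1(H,p).
\]
Since $\T_1(H,p)\subseteq \F_1(H,p)$, the first inequality in (i) follows, and the first inequality in (ii) follows in the same way from $O_2(H,p)$ and $\F_2(H,p)$.

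For the refinement when $H_0$ is balanced, Lemma \ref{switch}(ii) allows me to choose the vertex-orbit representatives $\tilde{V}_0$ so that the induced gain $\psi$ equals $+1$ on every edge of $H_0$. A balanced subgraph carries no loops (loops carry gain $-1$), so every row of $O_1(H,p)$ reads $(\ldots,\varphi_{\tilde{v},\tilde{w}},\ldots,\varphi_{\tilde{w},\tilde{v}},\ldots)$, which is exactly the corresponding row of the ordinary rigidity matrix $df_{H_0}(p)$ of the subframework $(H_0,p)$ spanned by $\tilde{V}_0\cap V(H)$. Thus $O_1(H,p)=df_{H_0}(p)$, and applying rank-nullity with $\T(H_0,p)\subseteq \F(H_0,p)=\ker df_{H_0}(p)$ yields the stronger inequality. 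The same reasoning works for $O_2(H,p)$ in (ii): on balanced edges the gain factor $\psi_{[e]}$ attached to the second column entry becomes $+1$, and the fixed choice of orientation only introduces overall row signs which do not affect the rank, so $O_2(H,p)$ again equals $df_{H_0}(p)$ up to row signs.

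The main obstacle is the row-independence step in the opening paragraph: one has to translate ``no proper $\bZ_2$-symmetric spanning subframework is symmetrically (resp. anti-symmetrically) infinitesimally rigid'' into the correct statement about $O_1(G,p)$ (resp. $O_2(G,p)$), and this hinges on the edge-independence of trivial flexes combined with the commutative diagram. Once that is established, the remaining arguments are a routine rank-nullity computation and, in the balanced case, the direct identification of the orbit matrix with the ordinary rigidity matrix afforded by Lemma \ref{switch}(ii).
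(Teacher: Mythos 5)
Your proof is correct and follows essentially the same route as the paper: identify $O_1(H,p)$ (resp.\ $O_2(H,p)$) as a row-independent submatrix of the orbit matrix of $(G,p)$, apply rank--nullity via the commutative diagram of Lemma \ref{CommDiagramLemma1}, and in the balanced case use Lemma \ref{switch}(ii) to identify the orbit matrix with $df_{H_0}(p)$. The only difference is that you spell out the edge-orbit-deletion argument for row independence, which the paper leaves implicit in its appeal to isostaticity.
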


\proof
By Lemma \ref{CommDiagramLemma1}, if $(G,p)$ is symmetrically isostatic then $O_1(H,p)$ is row independent and,
\[|E(H_0)| =\rank O_1(H,p)
= (\dim X)|V(H_0)| - \dim \ker O_1(H,p) 
\leq (\dim X)|V(H_0)|-\dim \T_1(H,p).\]
If $H_0$ is balanced then for some choice of vertex orbit representatives each edge of $H_0$ has gain $1$. By the remark preceding the lemma, $(H_0,p)$ is a well-positioned framework in $(X,\|\cdot\|)$ and, by equation (\ref{OpFormula}),  
$df_{H_0}(p) = O_1(H,p)$.
Thus, 
\[|E(H_0)|=\rank O_1(H,p)=\rank df_{H_0}(p)
\leq (\dim X)|V(H_0)| - \dim \T(H_0,p).\]  
This proves $(i)$ and the proof of $(ii)$ is similar.
 
\endproof

\section{Grid-like frameworks with reflectional symmetry}
\label{Sect:Grid}
In this section we consider bar-joint frameworks in $(\bR^2, \|\cdot\|_\P)$ where the norm $\|\cdot\|_\P$ has the property that the closed unit ball $\P=\{x\in \bR^2: \|x\|_\P\leq 1\}$ is a quadrilateral.
(The $\ell^1$ and $\ell^\infty$ norms are familiar examples of such norms. In general, every absolutely convex quadrilateral is the closed unit ball for a unique norm on $\bR^2$ defined by the Minkowski functional for the quadrilateral).
The norm is expressed by the formula,
\[\|x\|_\P = \max_{j=1,2}\, |\hat{F}_j\cdot x|, \,\,\,\,\,\,\, \forall x\in \bR^2,\]
where $\P = \bigcap_{j=1,2} \,\{x\in\bR^2: |x\cdot\hat{F}_j|\leq 1\}$.
Note that the boundary of $\P$ consists of four facets $\pm F_1$, $\pm F_2$ and that for each $j=1,2$, $\hat{F}_j$ is the unique extreme point of the polar set of $\P$ for which $F_j=\{x\in \P:\hat{F}_j\cdot x =1\}$.
Also note that each facet $F_j$ determines a linear functional, 
\[\varphi_{F_j}:X\to\bR,\quad x\mapsto \hat{F}_{j}\cdot x.\]

\subsection{Monochrome subgraph decompositions}
\label{Sect:framework}
Let $(G,p)$ be a bar-joint framework in $(\bR^2,\|\cdot\|_\P)$ and let $F$ be a facet of $\P$.
An edge $vw\in E$ is said to have the induced {\em framework colour} $[F]$ if $p_v-p_w$ is contained in the cone of $F$ or $-F$.
The subgraph of $G$ spanned by edges with framework colour $[F]$ is denoted by $G_{F}$ and referred to as an induced {\em monochrome subgraph} of $G$. 
Note that if $(G,p)$ is well-positioned then each edge $vw$ has exactly one framework colour $[F]$ and the linear functional $\varphi_{v,w}$ is given by either $\varphi_{F}$ or $\varphi_{-F}$. 
The following result was obtained (for $d$-dimensional frameworks) in \cite{kit-pow}.

\begin{theorem}
Let $(G,p)$ be a well-positioned bar-joint framework
in $(\bR^2, \|\cdot\|_\P)$.
Then $(G,p)$ is isostatic if and only if the monochrome subgraphs $G_{F_1}$ and $G_{F_2}$ are both spanning trees in $G$. 
\end{theorem}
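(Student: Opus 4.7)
The plan is to exploit the block structure of the rigidity operator that is induced by the edge colouring $E=E(G_{F_1})\sqcup E(G_{F_2})$. Because $\hat{F}_1$ and $\hat{F}_2$ are distinct extreme points of the polar of the quadrilateral $\P$, they are linearly independent and hence form a basis of $\bR^2$. I would begin by performing the change of coordinates $u_v\mapsto (\alpha_v,\beta_v):=(\hat{F}_1\cdot u_v,\hat{F}_2\cdot u_v)$, which is a linear isomorphism $(\bR^2)^{|V|}\to(\bR^2)^{|V|}$. Under this isomorphism the defining equation $\varphi_{v,w}(u_v-u_w)=0$ for an edge $vw$ of framework colour $[F_j]$ reduces to $\alpha_v-\alpha_w=0$ when $j=1$ and to $\beta_v-\beta_w=0$ when $j=2$, because $\varphi_{v,w}\in\{\pm\varphi_{F_1},\pm\varphi_{F_2}\}$ by the well-positioned hypothesis.

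Consequently the rigidity operator decouples: the constraints in the $\alpha$-coordinates involve only the edges of $G_{F_1}$, and those in the $\beta$-coordinates only the edges of $G_{F_2}$. Each block is (up to a choice of orientation) the signed incidence operator of the corresponding monochrome subgraph, whose kernel dimension equals the number of connected components. This gives the key identity
\[
\dim\F(G,p)=c(G_{F_1})+c(G_{F_2}),
\]
where $c(\cdot)$ denotes the number of connected components on the vertex set $V$ (isolated vertices count). Since every linear isometry of $(\bR^2,\|\cdot\|_\P)$ must preserve the quadrilateral $\P$, the group $\Isom(\bR^2,\|\cdot\|_\P)$ is finite, and Lemma \ref{lem:dimmotions} yields $\dim\T(G,p)=2$. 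Thus $(G,p)$ is infinitesimally rigid if and only if $c(G_{F_1})=c(G_{F_2})=1$, i.e.\ both monochrome subgraphs are connected spanning subgraphs of $G$.

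To upgrade this to the isostatic characterisation I would apply rank--nullity: when $(G,p)$ is isostatic, $df_G(p)$ has full row rank, so $|E|=2|V|-2$. Combined with $|E|=|E(G_{F_1})|+|E(G_{F_2})|$ and the inequalities $|E(G_{F_j})|\geq|V|-1$ forced by connectedness, we conclude $|E(G_{F_j})|=|V|-1$ for each $j$, so both $G_{F_1}$ and $G_{F_2}$ are spanning trees. Conversely, if both are spanning trees then $(G,p)$ is infinitesimally rigid by the above, and deleting any edge $e\in E(G_{F_j})$ disconnects $G_{F_j}$, thereby strictly increasing $\dim\F$ by the kernel formula and ruling out infinitesimal rigidity of any proper spanning subframework.

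The only genuinely delicate point is ensuring the coordinate change is an isomorphism, which rests on the linear independence of $\hat{F}_1$ and $\hat{F}_2$ — this is exactly where the hypothesis that $\P$ is a (non-degenerate) quadrilateral is used; for polygons with more facets this decoupling would have to be replaced by a more subtle matroid-union type argument. Everything else is a routine application of the earlier general lemmas to this two-block setting.
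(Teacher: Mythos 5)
Your argument is correct: the change of coordinates $u_v\mapsto(\hat{F}_1\cdot u_v,\hat{F}_2\cdot u_v)$ decouples the rigidity operator into the incidence operators of the two monochrome subgraphs, and the component count together with $\dim\T(G,p)=2$ and the row-independence argument gives exactly the spanning-tree characterisation. Note that the paper does not prove this theorem itself but imports it from \cite{kit-pow}; your proof is essentially the standard decoupling argument used there, so there is nothing substantive to compare.
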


We will now prove symmetric analogues of the above theorem for frameworks with reflectional symmetry.
Let $(G,p)$ be $\bZ_2$-symmetric with respect to $\theta:\bZ_2\to \Aut(G)$ and $\tau:\bZ_2\to \Isom(\mathbb{R}^2,\|\cdot\|_\P)$ where
$\tau(-1)$ is a reflection in the mirror $\ker\varphi_{F_1}$ along $\ker \varphi_{F_2}$. 
Then for each edge $e\in E$, both $e$ and $-e$ have the same induced framework colour and this will be referred to as the framework colour of the edge orbit $[e]$.
Define $G_{F,0}$ to be the {\em monochrome subgraph} of the quotient graph $G_0$ spanned by edges $[e]$ with framework colour $[F]$.

In the following, the set of vertex orbit representatives for $G$ will be denoted by $\tilde{V}_0=\{\tv_1,\ldots, \tv_n\}$ and $\tilde{V}_1$ will denote the set $\{-\tv_1,\ldots, -\tv_n\}$.

\begin{theorem}[Symmetrically isostatic frameworks]
\label{thm:forced1}
Let $(G,p)$ be a well-positioned and $\mathbb{Z}_2$-symmetric bar-joint framework in $(\bR^2,\|\cdot\|_\P)$ where $\P$ is a quadrilateral and $G\not=K_2$.
Suppose $\theta$ acts freely on $V$ and $\tau(-1)$ is  a  reflection in the mirror $\ker\varphi_{F_1}$ along $\ker \varphi_{F_2}$. 
The following  are equivalent.
\begin{enumerate}
\item[(i)] $(G,p)$ is  symmetrically isostatic.
\item[(ii)]
$G_{F_1,0}$ is a spanning unbalanced map graph in $G_0$ and 
$G_{F_2,0}$ is a spanning tree in $G_0$.
\end{enumerate}
\end{theorem}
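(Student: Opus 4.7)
The plan is to analyze the symmetric orbit matrix $O_1(G,p)$ by decomposing it according to the two framework colours and reading off the sparsity conditions from the ranks of the resulting blocks. By Lemma \ref{CommDiagramLemma1}, $O_1$ is isomorphically equivalent to $R_1$, and by Lemma \ref{lem:dimmotions} applied to the reflection $\tau(-1)=I-2P$ one has $\dim \T_1(G,p)=\rank(I+\tau(-1))=1$. So $(G,p)$ is symmetrically isostatic exactly when $O_1$ is row-independent and $|E_0|=2|V_0|-1$.

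First I would fix a basis $\{e_1,e_2\}$ of $\bR^2$ with $e_1\in\ker\varphi_{F_2}$ and $e_2\in\ker\varphi_{F_1}$. In this basis $\varphi_{F_1}$ depends only on the first coordinate and $\varphi_{F_2}$ only on the second, and since $\P$ is a quadrilateral each well-positioned edge has functional $\varphi_{v,w}=\pm\varphi_{F_j}$ for a unique $j$. After reordering columns into the two coordinate groups, $O_1$ becomes block diagonal with blocks $A_1$ (rows indexed by $E(G_{F_1,0})$, columns by $V_0$) and $A_2$ (rows indexed by $E(G_{F_2,0})$, columns by $V_0$). I would then observe that every loop of $G_0$ lies in $G_{F_1,0}$: for a fixed edge $\tilde v(-\tilde v)$ the displacement $(I-\tau(-1))p_{\tilde v}=2Pp_{\tilde v}$ lies in $\ker\varphi_{F_2}$, which sits in the interior of the cones of $\pm F_1$. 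Using the identities $\varphi_{F_1}\circ\tau(-1)=-\varphi_{F_1}$ and $\varphi_{F_2}\circ\tau(-1)=+\varphi_{F_2}$ I would then identify $A_2$, up to a nonzero row rescaling, with the oriented incidence matrix of the loopless graph $G_{F_2,0}$, and $A_1$ with the signed (Zaslavsky) incidence matrix of $(G_{F_1,0},\psi)$: loops give diagonal rows $\pm 2\varphi_{F_1}$, edges of gain $+1$ give rows with opposite-sign endpoint entries, and edges of gain $-1$ give rows with equal-sign endpoint entries.

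Next I would locate the trivial symmetric flex: since the fixed subspace of $\tau(-1)$ equals $\ker\varphi_{F_1}=\vspan(e_2)$, the space $\T_1(G,p)$ consists of constant vectors $(x,\ldots,x)$ with $x$ in the second coordinate direction, so it sits entirely in the second block. Hence symmetric isostaticity forces $\ker A_1=0$, $\dim\ker A_2=1$, and both blocks row-independent, which by counting rows gives $|E(G_{F_1,0})|=|V_0|$ and $|E(G_{F_2,0})|=|V_0|-1$. A row-independent incidence matrix of a loopless graph with these parameters forces $G_{F_2,0}$ to be a spanning tree. For $A_1$ I would invoke Zaslavsky's formula $\rank M = n - b(\Sigma)$ for the signed incidence matrix: nonsingularity of $A_1$ forces $G_{F_1,0}$ to have no balanced connected component, which combined with the edge count $|E(G_{F_1,0})|=|V_0|$ forces each connected component to carry exactly one cycle that is unbalanced, i.e.\ $G_{F_1,0}$ is a spanning unbalanced map graph. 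The converse direction follows by running these rank statements in reverse: (ii) yields $A_1$ invertible and $A_2$ of full row rank with one-dimensional kernel, so $\ker O_1=\T_1(G,p)$ giving symmetric infinitesimal rigidity, while tightness of the count together with row-independence of $O_1$ guarantees that deleting any edge orbit produces a non-trivial symmetric flex, hence symmetric isostaticity.

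The main obstacle will be the careful identification of $A_1$ as a signed incidence matrix and the correct application of Zaslavsky's rank formula. In particular one must track (a) the sign flip $\varphi_{F_1}\circ\tau(-1)=-\varphi_{F_1}$ and how it combines with the gain $\psi$ so that positive-gain edges contribute balanced incidences while negative-gain edges contribute unbalanced ones, and (b) the convention under which a loop of $G_0$ is treated as an unbalanced single-vertex cycle, so that each loop can serve as the distinguished unbalanced cycle in its connected component of $G_{F_1,0}$.
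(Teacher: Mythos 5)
Your proposal is correct, but it takes a genuinely different route from the paper. The paper argues combinatorially at the level of the covering framework: for (i)$\Rightarrow$(ii) it exhibits an explicit non-trivial symmetric infinitesimal flex whenever one of the monochrome quotient subgraphs fails to be spanning, has a balanced component, or is disconnected (assigning a vector in $\ker\varphi_{F_2}$ or $\ker\varphi_{F_1}$ to the offending component and $0$ elsewhere), and then pins down the exact structure by the counting Lemmas \ref{Counts} and \ref{lem:symcounts}; for (ii)$\Rightarrow$(i) it shows directly that any symmetric flex $u$ satisfies $u_v=u_{-v}$ (using the path from $v$ to $-v$ supplied by the unbalanced cycle in each component of $G_{F_1,0}$) and then $u_v=u_w$ along monochrome paths. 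You instead block-diagonalise the symmetric orbit matrix into a scalar block $A_1$ supported on the $F_1$-coloured rows and a scalar block $A_2$ on the $F_2$-coloured rows, identify $A_2$ with the ordinary incidence matrix of $G_{F_2,0}$ and $A_1$ with Zaslavsky's signed incidence matrix of $(G_{F_1,0},\psi)$, and read both implications off the rank formulas $\rank = n-c$ and $\rank = n-b$. I checked the sign bookkeeping you flag as the main obstacle: with $\tau(-1)=I-2P$ fixing $\ker\varphi_{F_1}$ one indeed gets $\varphi_{F_1}\circ\tau(-1)=-\varphi_{F_1}$ and $\varphi_{F_2}\circ\tau(-1)=\varphi_{F_2}$, so the endpoint entries of an $F_1$-row have signs $\epsilon$ and $-\psi_{[e]}\epsilon$ (balanced incidence for gain $1$, unbalanced for gain $-1$, and loops necessarily land in $G_{F_1,0}$ since $p_{\tilde v}-p_{-\tilde v}\in\ker\varphi_{F_2}$), while the trivial symmetric flex (translation along the mirror) lives entirely in the $A_2$ block; the spanning conclusions follow because a missed vertex orbit would give a zero column and hence an extra kernel vector. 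Your approach is less self-contained (it imports Zaslavsky's rank theorem, which the paper never needs) but more uniform: both implications fall out of one rank computation, and the anti-symmetric counterpart (Theorem \ref{thm:antisym1}) is obtained by simply noting that the sign of $\varphi_{F_j}\circ\tau(-1)$ flips which block carries the signed structure, which explains the reversal of roles that the paper handles by repeating the argument. The paper's explicit flexes, on the other hand, are reusable elsewhere and make the failure modes geometrically visible.
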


\begin{proof}
$(i)\Rightarrow (ii)$
Suppose there exists a vertex $[v_0]\in V_0 \setminus V(G_{F_1,0})$. 
Choose a non-zero vector $x\in \ker\varphi_{F_2}$ and for all $v\in V$ define,
\[u_{v} = \left\{ \begin{array}{ll}
x & \mbox{ if } v=\tv_0, \\
-x & \mbox{ if } v=-\tv_0, \\
0 & \mbox{ otherwise. } \\
\end{array}\right.
\]
Then $u$ is a non-trivial  symmetric infinitesimal flex for $(G,p)$.
Similarly, if there exists a vertex $[v_0]\in V_0 \setminus V(G_{F_2,0})$ 
then choose a non-zero vector $x\in \ker\varphi_{F_1}$. For all $v\in V$ define,
\[u_{v} = \left\{ \begin{array}{ll}
x & \mbox{ if } [v]=[v_0], \\
0 & \mbox{ otherwise. } \\
\end{array}\right.
\]
Again, $u$ is a non-trivial  symmetric infinitesimal flex for $(G,p)$.
In each case we obtained a contradiction and so $G_{F_2,0}$ and $G_{F_2,0}$ are both spanning subgraphs of $G_0$.

Suppose $G_{F_1,0}$ has a connected component $H_0$ which is a balanced subgraph of $G_0$.
Then by Lemma \ref{switch}, by applying switching operations if necessary, we may assume each edge of $H_0$ has trivial gain. 
Thus, if $H$ is the covering graph for $H_0$, then there is no edge $vw\in E(H)$ with $v\in \tilde{V}_0$ and $w\in \tilde{V}_1$.
Choose a non-zero vector $x\in \ker\varphi_{F_2}$ and for all 
$v\in V$ define,
\[u_{v} = \left\{ \begin{array}{ll}
x & \mbox{ if } [v]\in V(H_0) \mbox{ and } v\in \tilde{V}_0,   \\
-x & \mbox{ if } [v]\in V(H_0) \mbox{ and } v\in \tilde{V}_1,   \\
0 & \mbox{ otherwise. }
\end{array}\right.
\]
Then $u$ is a non-trivial  symmetric infinitesimal flex for $(G,p)$ which is a contradiction. 
Thus each connected component of $G_{F_1,0}$ is an unbalanced subgraph of $G_0$.

Suppose $G_{F_2,0}$ is not connected, and 
let $H_0$ be a connected component of $G_{F_2,0}$.
Choose a non-zero vector $x\in \ker\varphi_{F_1}$ and for all 
$v\in V$ define,
\[u_{v} = \left\{ \begin{array}{ll}
x & \mbox{ if } [v]\in V(H_0),  \\
0 & \mbox{ otherwise. } \\
\end{array}\right.
\]
Again $u$ is a non-trivial  symmetric infinitesimal flex for $(G,p)$ and this is a contradiction. Thus $G_{F_2,0}$ is a connected  spanning subgraph of $G_0$.

By Lemma \ref{lem:dimmotions},  $\dim\T_1(G,p)=\rank (I+\tau(-1))=1$.
Thus by Lemmas \ref{Counts} and \ref{lem:symcounts}, $|E_0|=2|V_0|-1$.
Note that each connected component of $G_{F_1,0}$ must contain a cycle (since it is unbalanced) and so if $G_{F_1,0}$ has $n$ connected components, $H_1,H_2,\ldots, H_n$ say, then $|E(H_j)|\geq|V(H_j)|$ for each $j$ and,
\[|E(G_{F_1,0})|=\sum_{j=1}^n |E(H_j)|\geq\sum_{j=1}^n|V(H_j)|= |V_0|.\]
Since $G_{F_2,0}$ is connected it must contain a spanning tree and so 
$|E(G_{F_2,0})|\geq |V_0|-1$.
It follows that $|E(G_{F_1,0})|= |V_0|$, 
$|E(G_{F_2,0})|=|V_0|-1$ and $|E(H_j)|=|V(H_j)|$ for each $j$.
Thus $G_{F_1,0}$ is an unbalanced spanning map graph and $G_{F_2,0}$ is a spanning tree in $G_0$.

$(ii)\Rightarrow (i)$
Suppose $(ii)$ holds and let $u$ be a  symmetric infinitesimal flex of $(G,p)$. 
Let $v\in V$ and note that since $G_{F_1,0}$ has a unique unbalanced cycle, the covering graph for $H_0$ is a connected subgraph of $G_{F_1}$ which contains both $v$ and $-v$. 
In particular, there is a path $vv_1,v_1v_2,\ldots,v_n(-v)$ in $G_{F_1}$ from $v$ to $-v$ and so, 
\[u_{v}-u_{-v} = 
(u_v-u_{v_1})+(u_{v_1}-u_{v_2})+\cdots+(u_{v_n}-u_{-v})
 \in\ker\varphi_{F_1}.\]
Also note that  
$u_v-u_{-v}=(I-\tau(-1))u_{v}=2Pu_v\in \ker\varphi_{F_2}$.
Thus $u_{v}=u_{-v}$ for all $v\in V$.
Since $u_{-v}=\tau(-1)u_{v}$ it also follows that $u_{v}\in\ker\varphi_{F_1}$ for all $v\in V$.
Let $e=vw\in E$.
It is clear  that $u_{v}-u_{w}\in \ker \varphi_{F_1}$.
Since $G_{F_2,0}$ is a spanning tree in $G_0$ there exists a path 
 in $G_{F_2,0}$ from $[v]$ to $[w]$ with  gain $\gamma'$ say. 
Thus there exists a path in $G_{F_2}$ from $v$ to $\gamma' w$
and so $u_v-u_w=u_{v}-u_{\gamma' w}\in \ker \varphi_{F_2}$.
We conclude that $u_{v}=u_{w}$ for all $vw\in E$ and so $u$ is a trivial infinitesimal flex of $(G,p)$.
To see that $(G,p)$ is symmetrically isostatic note that 
$|E_0|= 2|V_0|-1$ and apply Lemma \ref{Counts}.
\end{proof}

The following theorem characterises anti-symmetric isostatic frameworks and is a counterpart to the previous theorem. While the statement and proof are similar there are some key differences. In particular, the roles of the monochrome subgraphs are reversed.

\begin{theorem}[Anti-symmetrically isostatic frameworks]\label{thm:antisym1}
Let $(G,p)$ be a well-positioned and $\mathbb{Z}_2$-symmetric bar-joint framework in $(\bR^2,\|\cdot\|_\P)$ where $\P$ is a quadrilateral.
Suppose $\theta$ acts freely on $V$ and  $\tau(-1)$ is  a  reflection in the mirror $\ker\varphi_{F_1}$ along $\ker \varphi_{F_2}$.
The following  are equivalent.
\begin{enumerate}
\item[(i)] $(G,p)$ is  anti-symmetrically isostatic.
\item[(ii)]
$G_{F_1,0}$ is a spanning tree in $G_0$ and
$G_{F_2,0}$ is a spanning unbalanced map graph in $G_0$.
\end{enumerate}
\end{theorem}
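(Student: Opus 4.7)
The proof will follow the template of Theorem~\ref{thm:forced1}, with the roles of $F_1$ and $F_2$, and of symmetric and anti-symmetric flexes, interchanged. The key algebraic input is that $\tau(-1)=I-2P$ acts as $+I$ on the mirror $\ker\varphi_{F_1}$ and as $-I$ on the reflection direction $\ker\varphi_{F_2}$, and for an anti-symmetric flex one has $u_v-u_{-v}=(I+\tau(-1))u_v=2(I-P)u_v\in\ker\varphi_{F_1}$, rather than $\ker\varphi_{F_2}$ as in the symmetric case. Since $\ker\varphi_{F_1}\cap\ker\varphi_{F_2}=\{0\}$, this swap dictates that $G_{F_1,0}$ must now play the role of the path-providing spanning tree, while $G_{F_2,0}$ must supply the $v$-to-$(-v)$ paths and hence carry an unbalanced cycle in each component.

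For $(i)\Rightarrow(ii)$, each structural conclusion will be obtained by exhibiting a non-trivial anti-symmetric flex when it fails, mirroring the four constructions in Theorem~\ref{thm:forced1}. If some $[v_0]\notin V(G_{F_2,0})$, pick $0\neq x\in\ker\varphi_{F_1}$ (so $\tau(-1)x=x$) and set $u_{\tilde v_0}=x$, $u_{-\tilde v_0}=-x$, zero elsewhere. If some $[v_0]\notin V(G_{F_1,0})$, pick $0\neq x\in\ker\varphi_{F_2}$ (so $\tau(-1)x=-x$) and set $u_{\tilde v_0}=u_{-\tilde v_0}=x$, zero elsewhere. If $G_{F_1,0}$ has a proper component $H_0$, use $x\in\ker\varphi_{F_2}$ and set $u_v=x$ for every $v\in V$ with $[v]\in V(H_0)$, zero elsewhere. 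If $G_{F_2,0}$ has a balanced component $H_0$, first switch to trivial gain via Lemma~\ref{switch}, then pick $x\in\ker\varphi_{F_1}$ and assign $+x$ on the $\tilde{V}_0$-part of the lift of $V(H_0)$, $-x$ on its $\tilde{V}_1$-part, zero elsewhere. In each case anti-symmetry is immediate from the sign of $\tau(-1)$ on the chosen kernel, and the flex identities follow because every edge carrying a nonzero difference has the complementary framework colour. Finally, using $\dim\T_2(G,p)=\rank(I-\tau(-1))=1$ from Lemma~\ref{lem:dimmotions}, the absence of fixed edges from Lemma~\ref{AntiSymFixedEdges}, and Lemmas~\ref{Counts} and~\ref{lem:symcounts}, one gets $|E_0|=2|V_0|-1$; combining with the lower bounds $|E(G_{F_1,0})|\geq|V_0|-1$ (connected spanning) and $|E(G_{F_2,0})|\geq|V_0|$ (each unbalanced component has at least as many edges as vertices), both inequalities must be tight, yielding the claimed structure.

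For $(ii)\Rightarrow(i)$, let $u$ be any anti-symmetric flex. For each $v\in V$ the component of $G_{F_2,0}$ containing $[v]$ carries an unbalanced cycle, which lifts to a path in $G_{F_2}$ from $v$ to $-v$; summing edge-differences gives $u_v-u_{-v}\in\ker\varphi_{F_2}$. Combined with $u_v-u_{-v}\in\ker\varphi_{F_1}$ from anti-symmetry this forces $u_v=u_{-v}$, and hence $u_v\in\ker(I+\tau(-1))=\ker\varphi_{F_2}$ for every $v$. Next, for each edge $vw\in E$, the spanning tree $G_{F_1,0}$ supplies a unique path from $[v]$ to $[w]$ with some gain $\gamma'\in\bZ_2$, lifting to a path in $G_{F_1}$ from $v$ to $\gamma' w$. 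Since $u_w\in\ker\varphi_{F_2}$ yields $u_{-w}=-\tau(-1)u_w=u_w$, and hence $u_{\gamma' w}=u_w$ in either case, summation along that path gives $u_v-u_w\in\ker\varphi_{F_1}$, and intersecting with $\ker\varphi_{F_2}$ forces $u_v=u_w$. Thus $u$ is constant on connected components of $G$; descending to $G_0$ via $u_v=u_{-v}$ and using that $G_0$ is connected (it carries the spanning tree $G_{F_1,0}$), $u$ is globally constant with value in $\ker\varphi_{F_2}$, i.e.\ a trivial anti-symmetric flex. The tight count $|E_0|=2|V_0|-1$ together with Lemma~\ref{Counts} then prevents any $\bZ_2$-symmetric proper spanning subframework from being anti-symmetrically infinitesimally rigid, giving isostaticity. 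The main obstacle I anticipate is bookkeeping the $\pm$ signs: whereas in Theorem~\ref{thm:forced1} each kernel gives a natural flex construction, here one must reverse which $\bZ_2$-orbits receive equal versus opposite values, according to whether $\tau(-1)$ acts as $+I$ or $-I$ on the vector being placed, so that anti-symmetry (and not symmetry) is produced by each of the four constructions.
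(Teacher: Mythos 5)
Your proposal is correct and follows essentially the same route as the paper: the four anti-symmetric flex constructions in $(i)\Rightarrow(ii)$ match the paper's exactly, and your $(ii)\Rightarrow(i)$ argument is precisely the "roles of $G_{F_1,0}$ and $G_{F_2,0}$ reversed" adaptation of Theorem~\ref{thm:forced1} that the paper invokes, with the sign bookkeeping ($u_v-u_{-v}=(I+\tau(-1))u_v\in\ker\varphi_{F_1}$, hence $u_v=u_{-v}$ and $u_v\in\ker\varphi_{F_2}$) handled correctly. You in fact supply more detail than the paper, which leaves these steps to the reader.
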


\begin{proof}
$(i)\Rightarrow (ii)$
Suppose  there exists a vertex $[v_0]\in V_0 \setminus V(G_{F_1,0})$.
Choose a non-zero vector $x\in \ker\varphi_{F_2}$. For all $v\in V$ define,
\[u_{v} = \left\{ \begin{array}{ll}
x & \mbox{ if } [v]=[v_0],\\
0 & \mbox{ otherwise. } \\
\end{array}\right.
\]
Similarly, suppose there exists a vertex $[v_0]\in V_0 \setminus V(G_{F_2,0})$. 
Choose a non-zero vector $x\in \ker\varphi_{F_1}$ and for all $v\in V$ define,
\[u_{v} = \left\{ \begin{array}{ll}
x & \mbox{ if } v=\tv_0,  \\
-x & \mbox{ if } v=-\tv_0,  \\
0 & \mbox{ otherwise. } \\
\end{array}\right.
\]
In each case $u$ is a non-trivial anti-symmetric infinitesimal flex for $(G,p)$.

Suppose $G_{F_2,0}$ has a connected component $H_0$ which is a balanced subgraph of $G_0$.
Then, using some switching operations if necessary, we may assume $H_0$ has trivial gain. Choose a non-zero vector $x\in \ker\varphi_{F_1}$ and for all 
$v\in V$ define,
\[u_{v} = \left\{ \begin{array}{ll}
x & \mbox{ if } [v]\in V(H_0) \mbox{ and } v\in \tilde{V}_0,\\
-x & \mbox{ if } [v]\in V(H_0) \mbox{ and } v\in \tilde{V}_1,  \\
0 & \mbox{ otherwise. }
\end{array}\right.
\]
Similarly, suppose $G_{F_1,0}$ is not connected, 
and let $H_0$ be a connected component of $G_{F_1,0}$. Choose a non-zero vector $x\in \ker\varphi_{F_2}$ and for all 
$v\in V$ define, 
\[u_{v} = \left\{ \begin{array}{ll}
x & \mbox{ if } [v]\in V(H_0) ,\\
0 & \mbox{ otherwise. }
\end{array}\right.
\]
Again, in each case $u$ is a non-trivial anti-symmetric infinitesimal flex for $(G,p)$. 
The remainder of the proof is similar to Theorem \ref{thm:forced1}.

$(ii)\Rightarrow (i)$
Apply an argument as in Theorem \ref{thm:forced1} but with the roles of $G_{F_1,0}$ and $G_{F_2,0}$ reversed. 
\end{proof}

The previous two theorems can be combined to obtain the following  characterisation of general infinitesimal rigidity, again expressed in terms of monochrome subgraph decompositions in the quotient graph.

\begin{corollary}[Infinitesimally rigid frameworks] \label{cor}
Let $(G,p)$ be a well-positioned and $\mathbb{Z}_2$-symmetric bar-joint framework in $(\bR^2,\|\cdot\|_\P)$ where $\P$ is a quadrilateral.
Suppose $\theta$ acts freely on $V$ and  $\tau(-1)$ is  a  reflection in the mirror $\ker\varphi_{F_1}$ along $\ker \varphi_{F_2}$.  
The following  are equivalent.
\begin{enumerate}
\item[(i)] $(G,p)$ is infinitesimally rigid.
\item[(ii)] The monochrome subgraphs of $G_0$ both contain connected spanning unbalanced map graphs.
\end{enumerate}
\end{corollary}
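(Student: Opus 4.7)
The plan is to reduce the corollary to the two preceding theorems by exploiting the decomposition $\F(G,p)=\F_1(G,p)\oplus\F_2(G,p)$ and $\T(G,p)=\T_1(G,p)\oplus\T_2(G,p)$ from Section~\ref{Sect:orbit}. Since every infinitesimal flex of $(G,p)$ splits uniquely as the sum of a symmetric and an anti-symmetric flex, and the same holds for trivial flexes, $(G,p)$ is infinitesimally rigid if and only if it is simultaneously symmetrically and anti-symmetrically infinitesimally rigid.

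For $(ii)\Rightarrow(i)$, I would scrutinise the $(ii)\Rightarrow(i)$ argument of Theorem~\ref{thm:forced1} and observe that only two features of the monochrome decomposition are actually invoked there: first, that for every $v\in V$ there exists a path in $G_{F_1}$ from $v$ to $-v$ (equivalently, the component of $[v]$ in $G_{F_1,0}$ is unbalanced); and second, that $G_{F_2,0}$ is connected and spanning in $G_0$. A connected spanning unbalanced map graph inside $G_{F_1,0}$ delivers the first condition, since its lift to $G_{F_1}$ is connected and contains both $v$ and $-v$ for every $v\in V$. A connected spanning unbalanced map graph inside $G_{F_2,0}$ obviously contains a connected spanning subgraph, which yields the second. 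Hence (ii) implies symmetric infinitesimal rigidity. Interchanging the roles of $F_1$ and $F_2$ and invoking the analogous argument from Theorem~\ref{thm:antisym1} yields anti-symmetric infinitesimal rigidity, so (ii) implies (i).

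For $(i)\Rightarrow(ii)$, I would isolate those portions of the $(i)\Rightarrow(ii)$ proofs of Theorems~\ref{thm:forced1} and \ref{thm:antisym1} which construct explicit non-trivial symmetric or anti-symmetric infinitesimal flexes from a non-spanning monochrome subgraph, a balanced component, or a disconnected monochrome subgraph; these constructions depend only on the appropriate one-sided rigidity assumption, not on the edge-count identity used to promote rigidity to isostaticity. Running them under symmetric infinitesimal rigidity of $(G,p)$ forces $G_{F_1,0}$ to span $V_0$ with every component unbalanced and $G_{F_2,0}$ to be connected and spanning; running them under anti-symmetric infinitesimal rigidity forces the mirror statements with $F_1$ and $F_2$ interchanged. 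Combining these, each of $G_{F_1,0}$ and $G_{F_2,0}$ is connected, spans $V_0$, and contains at least one unbalanced cycle.

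The final ingredient is a short graph-theoretic observation: any connected spanning subgraph of $G_0$ that contains an unbalanced cycle also contains a connected spanning unbalanced map graph. Indeed, fix any spanning tree $T$ of the subgraph; the fundamental cycles of the non-tree edges generate its cycle space over $\bZ_2$, and the gain function $\psi$ descends to a homomorphism on this cycle space. Since the subgraph contains an unbalanced cycle, at least one fundamental cycle must be unbalanced, and adjoining the corresponding non-tree edge to $T$ produces a connected spanning subgraph with $|V_0|$ edges whose unique cycle is unbalanced. The main obstacle is essentially bookkeeping: one must carefully verify which hypotheses in the $(i)\Rightarrow(ii)$ directions of Theorems~\ref{thm:forced1} and \ref{thm:antisym1} are genuinely needed so that the isostatic conclusion weakens cleanly to a rigidity conclusion, the combinatorial surplus then being absorbed by the map-graph extraction above.
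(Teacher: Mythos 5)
Your proof is correct, and while it shares the paper's overall skeleton (infinitesimal rigidity $\Leftrightarrow$ symmetric plus anti-symmetric rigidity via $\F=\F_1\oplus\F_2$, $\T=\T_1\oplus\T_2$, then an appeal to Theorems~\ref{thm:forced1} and \ref{thm:antisym1}), your execution of $(i)\Rightarrow(ii)$ is genuinely different from the paper's. The paper deletes edge orbits to obtain a symmetrically isostatic spanning subframework $A$ and an anti-symmetrically isostatic spanning subframework $B$, applies the two theorems to these as black boxes, and then glues: the components of the spanning unbalanced map graph $A_{F_1,0}$ are joined into a connected spanning unbalanced map graph using edges of the spanning tree $B_{F_1,0}$, and symmetrically for $F_2$. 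You instead open up the proofs of the theorems, observe (correctly) that the explicit flex constructions use only the relevant one-sided rigidity and not the edge count, conclude that each full monochrome subgraph $G_{F_i,0}$ is connected, spanning and unbalanced, and then extract a connected spanning unbalanced map graph by adjoining to a spanning tree one non-tree edge whose fundamental cycle is unbalanced (using that the $\bZ_2$-gain is a homomorphism on the cycle space). Your route avoids the isostatic-subframework detour and the slightly fiddly component-gluing at the cost of re-inspecting the theorem proofs rather than citing their statements; the fundamental-cycle extraction is a clean and standard piece of signed-graph theory, and the one lemma you use implicitly in $(ii)\Rightarrow(i)$ --- that the lift of a connected unbalanced quotient subgraph is connected and contains both $v$ and $-v$ --- is exactly the fact the paper also relies on, so no gap there. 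Both arguments are sound; the paper's has the advantage of invoking the theorems only through their statements, yours the advantage of a more self-contained combinatorial endgame.
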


\begin{proof}
$(i)\Rightarrow (ii)$
If $(G,p)$ is infinitesimally rigid then it is both symmetrically and anti-symmetrically infinitesimally  rigid.
By removing edge orbits from $G$ we arrive at a $\bZ_2$-symmetric spanning subgraph $A$ such that $(A,p)$ is  symmetrically isostatic.
By Theorem \ref{thm:forced1},
$A_{F_1,0}$ is a spanning unbalanced  map graph in $G_0$ and $A_{F_2,0}$ is a spanning tree.
Similarly, by removing edge orbits from $G$ we arrive at a $\bZ_2$-symmetric spanning subgraph $B$ such that $(B,p)$ is  anti-symmetrically isostatic.
By Theorem \ref{thm:antisym1}, $B_{F_1,0}$ is a spanning tree and
$B_{F_2,0}$ is a spanning unbalanced map graph in $G_0$.
Let $C_1,\ldots,C_n$ be the connected components of $A_{F_1,0}$.
For $k=2,\ldots, n$, let $T_k$ be a spanning tree for $C_k$. 
Note that there exist edges $[e_1],\ldots,[e_{n-1}]$ in the spanning tree $B_{F_1,0}$ such that $C_1\cup T_2\cup\cdots\cup T_n \cup\{[e_1],\ldots,[e_{n-1}]\}$ is a connected spanning unbalanced map graph in $G_{F_1,0}$. A similar argument shows that $G_{F_2,0}$ contains a connected spanning unbalanced map graph.

$(ii)\Rightarrow (i)$
Suppose $(ii)$ holds.
Let $H_{F_1,0}$ and $H_{F_2,0}$ be connected spanning unbalanced map graphs in $G_0$.
Note that $H_{F_2,0}$ contains a spanning tree for $G_0$ and so,
by Theorem \ref{thm:forced1}, $(G,p)$ is symmetrically infinitesimally rigid.
Similarly, $H_{F_1,0}$ contains a spanning tree for $G_0$ and so, by Theorem \ref{thm:antisym1}, $(G,p)$ is anti-symmetrically infinitesimally rigid.
Hence $(G,p)$ is infinitesimally rigid. 

\end{proof}

\subsection{Existence of rigid placements with reflectional symmetry} 
\label{Sect:graph}

In this section, necessary and sufficient conditions are obtained for a $\mathbb{Z}_2$-symmetric graph to have a well-positioned  symmetric or anti-symmetric infinitesimally rigid realisation as a grid-like reflection framework.
A signed quotient graph $(G_0,\psi)$ is  \emph{$(2,2,1)$-gain-sparse} if it satisfies
\begin{itemize}
\item[(i)] $|F|\leq 2|V(F)|-2$ for every balanced $F\subseteq E_0$;
\item[(ii)] $|F|\leq 2|V(F)|-1$ for every  $F\subseteq E_0$.
\end{itemize}
If, in addition,  $|E_0|= 2|V_0|-1$, then $(G_0,\psi)$ is said to be \emph{$(2,2,1)$-gain-tight}. 

We will now describe a number of recursive operations on a $(2,2,1)$-gain tight signed quotient graph $(G_0,\psi)$. See also \cite{jkt,anbs,schtan} for a description of some of these moves. 

\begin{definition} A \emph{Henneberg 1 move} is an addition of a new vertex $[v]$ and two new edges $[e_1]$ and $[e_2]$ to $(G_0,\psi)$, where $[e_1]$ and $[e_2]$ are incident with $[v]$ and are not both loops at $[v]$. If $[e_1]$ and $[e_2]$ are parallel edges, then the gain labels are assigned so that $\psi_{[e_1]}\neq \psi_{[e_2]}$.
\end{definition}
If $[e_1]$ and $[e_2]$ are non-parallel and neither is a loop then the move is called $H1a$. If these edges are parallel the move is called $H1b$. If one of the edges is a loop, then the move is called $H1c$. See also  Figure~\ref{fig:inductive}.

\begin{figure}[htp]
\begin{center}
\begin{tikzpicture}[very thick,scale=0.8]
\tikzstyle{every node}=[circle, draw=black, fill=white, inner sep=0pt, minimum width=5pt];
\filldraw[fill=black!20!white, draw=black, thin, dashed](0,0)circle(0.8cm);
\node (p1) at (40:0.5cm) {};
\node (p2) at (140:0.5cm) {};
   \node [draw=white, fill=white] (a) at (1.7,-0.42)  {(a)};
  \draw[ultra thick,->](1.5,0.1)--(2,0.1);

\end{tikzpicture}
\hspace{0.1cm}
\begin{tikzpicture}[very thick,scale=0.8]
\tikzstyle{every node}=[circle, draw=black, fill=white, inner sep=0pt, minimum width=5pt];
\filldraw[fill=black!20!white, draw=black, thin, dashed](0,0)circle(0.8cm);
\node (p1) at (40:0.5cm) {};
\node (p2) at (140:0.5cm) {};
\node (p3) at (90:1.2cm) {};
\draw[gray] (p3)--(p1);
\draw (p3)--(p2);
\end{tikzpicture}
\hspace{1.2cm}
\begin{tikzpicture}[very thick,scale=0.8]
\tikzstyle{every node}=[circle, draw=black, fill=white, inner sep=0pt, minimum width=5pt];
\filldraw[fill=black!20!white, draw=black, thin, dashed](0,0)circle(0.8cm);
\node (p4) at (90:0.2cm) {};
\node [draw=white, fill=white] (a) at (1.7,-0.42)  {(b)};
\draw[ultra thick, ->](1.5,0.1)--(2,0.1);
   \end{tikzpicture}
\hspace{0.1cm}   \begin{tikzpicture}[very thick,scale=0.8]
\tikzstyle{every node}=[circle, draw=black, fill=white, inner sep=0pt, minimum width=5pt];
\filldraw[fill=black!20!white, draw=black, thin, dashed](0,0)circle(0.8cm);
\node (p3) at (90:1.2cm) {};
\node (p4) at (90:0.2cm) {};
\path
(p3) edge [bend right=22] (p4);
\path[gray]
(p3) edge [bend left=22] (p4);
\end{tikzpicture}
\hspace{1.2cm}
\begin{tikzpicture}[very thick,scale=0.8]
\tikzstyle{every node}=[circle, draw=black, fill=white, inner sep=0pt, minimum width=5pt];
\filldraw[fill=black!20!white, draw=black, thin, dashed](0,0)circle(0.8cm);
\node (p4) at (90:0.2cm) {};
   \node [draw=white, fill=white] (a) at (1.7,-0.42)  {(c)};
\draw[ultra thick, ->](1.5,0.1)--(2,0.1);
\end{tikzpicture}
\hspace{0.1cm}   \begin{tikzpicture}[very thick,scale=0.8]
\tikzstyle{every node}=[circle, draw=black, fill=white, inner sep=0pt, minimum width=5pt];
\filldraw[fill=black!20!white, draw=black, thin, dashed](0,0)circle(0.8cm);
\node (p3) at (90:1.2cm) {};
\node (p4) at (90:0.2cm) {};
\draw (p3)--(p4);
\path[gray]
(p3) edge [loop above, >=stealth,shorten >=1pt,looseness=20] (p3);
\end{tikzpicture}
\end{center}
\vspace{-0.2cm}
\caption{Henneberg 1 moves (with gain labels of edges omitted): (a) H1a-move;
(b) H1b-move;
(c) H1c-move.  }
\label{fig:inductive}
\end{figure}
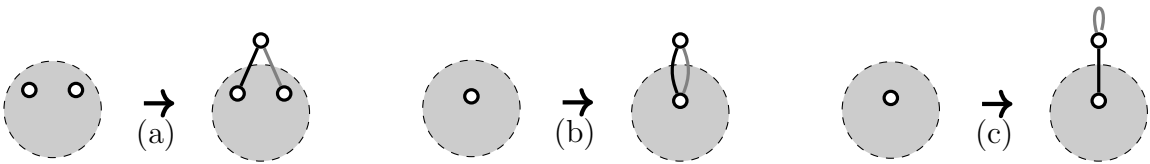

\begin{definition} A \emph{Henneberg 2 move} deletes an edge $[e]$ of $(G_0,\psi)$ and adds a new vertex $[v]$ of degree $3$ to $(G_0,\psi)$ as follows.
The edge $[e]$ is subdivided into two new edges $[e_1]$ and $[e_2]$ (both incident with $[v]$) so that the gains of the new edges satisfy $\psi_{[e_1]}\cdot \psi_{[e_2]}=\psi_{[e]}$. 
Finally, the third new edge, $[e_3]$, joins $[v]$ to a vertex $[z]$ of $(G_0,\psi)$ so that every 2-cycle $[e_i][e_j]$, if it exists, is unbalanced. 
\end{definition}
Suppose first that the edge $[e]$ is not a loop. If none of the edges $[e_i]$ are  parallel, then the move is called H2a. If two of the edges $[e_i]$ are parallel (i.e., $[z]$ is an end-vertex of $[e]$), then the move is called H2b. If the edge $[e]$ is a loop, then the move is called H2c.  See Figure~\ref{fig:inductiveH2}.

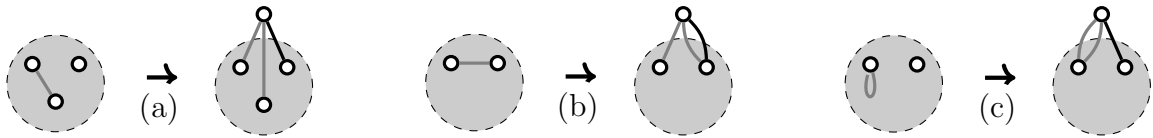
\begin{figure}[htp]
\begin{center}
\begin{tikzpicture}[very thick,scale=0.8]
\tikzstyle{every node}=[circle, draw=black, fill=white, inner sep=0pt, minimum width=5pt];
\filldraw[fill=black!20!white, draw=black, thin, dashed](0,0)circle(0.8cm);
\node (p1) at (40:0.5cm) {};
\node (p2) at (140:0.5cm) {};
\node (p3) at (270:0.3cm) {};
\draw[gray](p2)--(p3);
  \draw[ultra thick,->](1.5,0.1)--(2,0.1);
   \node [draw=white, fill=white] (a) at (1.7,-0.42)  {(a)};
\end{tikzpicture}
\hspace{0.1cm}
\begin{tikzpicture}[very thick,scale=0.8]
\tikzstyle{every node}=[circle, draw=black, fill=white, inner sep=0pt, minimum width=5pt];
\filldraw[fill=black!20!white, draw=black, thin, dashed](0,0)circle(0.8cm);
\node (p1) at (40:0.5cm) {};
\node (p2) at (140:0.5cm) {};
\node (p3) at (270:0.3cm) {};
\node (p4) at (90:1.2cm) {};
\draw(p4)--(p1);
\draw[gray]  (p4)--(p2);
\draw[gray]  (p4)--(p3);
\end{tikzpicture}
\hspace{1.2cm}
\begin{tikzpicture}[very thick,scale=0.8]
\tikzstyle{every node}=[circle, draw=black, fill=white, inner sep=0pt, minimum width=5pt];
\filldraw[fill=black!20!white, draw=black, thin, dashed](0,0)circle(0.8cm);
\node (p1) at (40:0.5cm) {};
\node (p2) at (140:0.5cm) {};
\draw[gray](p2)--(p1);
\draw[ultra thick, ->](1.5,0.1)--(2,0.1);
   \node [draw=white, fill=white] (a) at (1.7,-0.42)  {(b)};
\end{tikzpicture}
\hspace{0.1cm}   \begin{tikzpicture}[very thick,scale=0.8]
\tikzstyle{every node}=[circle, draw=black, fill=white, inner sep=0pt, minimum width=5pt];
\filldraw[fill=black!20!white, draw=black, thin, dashed](0,0)circle(0.8cm);
\node (p3) at (90:1.2cm) {};
\node (p1) at (40:0.5cm) {};
\node (p2) at (140:0.5cm) {};
\draw[gray](p3)--(p2);
\path
(p1) edge [bend right=22] (p3);
\path[gray]
(p1) edge [bend left=22] (p3);
\end{tikzpicture}
\hspace{1.2cm}
\begin{tikzpicture}[very thick,scale=0.8]
\tikzstyle{every node}=[circle, draw=black, fill=white, inner sep=0pt, minimum width=5pt];
\filldraw[fill=black!20!white, draw=black, thin, dashed](0,0)circle(0.8cm);
\node (p1) at (40:0.5cm) {};
\node (p2) at (140:0.5cm) {};
\path[gray]
(p2) edge [loop below, >=stealth,shorten >=1pt,looseness=20] (p2);
\draw[ultra thick, ->](1.5,0.1)--(2,0.1);
   \node [draw=white, fill=white] (a) at (1.7,-0.42)  {(c)};
\end{tikzpicture}
\hspace{0.1cm}   \begin{tikzpicture}[very thick,scale=0.8]
\tikzstyle{every node}=[circle, draw=black, fill=white, inner sep=0pt, minimum width=5pt];
\filldraw[fill=black!20!white, draw=black, thin, dashed](0,0)circle(0.8cm);
\node (p3) at (90:1.2cm) {};
\node (p1) at (40:0.5cm) {};
\node (p2) at (140:0.5cm) {};
\path[gray] (p2) edge [bend left=22] (p3);
\path
(p1) edge [bend right=0] (p3);
\path[gray]
(p2) edge [bend right=22] (p3);
\end{tikzpicture}
\end{center}
\vspace{-0.2cm}
\caption{Henneberg 2 moves (with gain labels of edges omitted): (a) H2a-move;
(b) H2b-move;
(c) H2c-move.  }
\label{fig:inductiveH2}
\end{figure}

\begin{definition}
A \emph{vertex-to-$K_4$} move removes a vertex $[v]$ (of arbitrary degree) and all the edges incident with $[v]$, and adds in a copy of $K_4$ with only trivial gains.
Each removed edge $[x][v]$ is replaced by an edge $[x][y]$ for some $[y]$ in the new $K_4$, where the gain is preserved. If the deleted vertex $[v]$ is incident to a loop, then this loop is replaced by an edge $[y][z]$ with gain $-1$, where $[y]$ and $[z]$ are two (not necessarily distinct) vertices of the new $K_4$. 
\end{definition}
See Figure~\ref{fig:inductiveH3}(a).

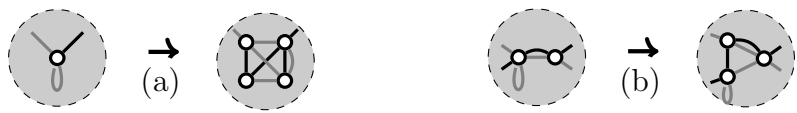
\begin{figure}[htp]
\begin{center}
\begin{tikzpicture}[very thick,scale=0.8]
\tikzstyle{every node}=[circle, draw=black, fill=white, inner sep=0pt, minimum width=5pt];
\filldraw[fill=black!20!white, draw=black, thin, dashed](0,0)circle(0.8cm);
\node (p1) at (0,0) {};
\draw(p1)--(45:0.6cm);
\draw[gray](p1)--(135:0.6cm);
\path[gray]
(p1) edge [loop below, >=stealth,shorten >=1pt,looseness=20] (p1);
\draw[ultra thick, ->](1.5,0.1)--(2,0.1);
   \node [draw=white, fill=white] (a) at (1.7,-0.42)  {(a)};
\end{tikzpicture}
\hspace{0.1cm}   \begin{tikzpicture}[very thick,scale=0.8]
\tikzstyle{every node}=[circle, draw=black, fill=white, inner sep=0pt, minimum width=5pt];
\filldraw[fill=black!20!white, draw=black, thin, dashed](0,0)circle(0.8cm);
\node (p1) at (45:0.45cm) {};
\node (p2) at (135:0.45cm) {};
\node (p3) at (225:0.45cm) {};
\node (p4) at (315:0.45cm) {};
\draw(p1)--(45:0.75cm);
\draw[gray](p2)--(135:0.75cm);

\draw[gray](p1)--(p2);
\draw(p3)--(p2);
\draw[gray](p3)--(p4);
\draw(p1)--(p4);
\draw(p1)--(p3);
\draw[gray](p4)--(p2);
\path[gray]
(p1) edge [bend left=30] (p4);
\end{tikzpicture}
\hspace{2cm}
\begin{tikzpicture}[very thick,scale=0.8]
\tikzstyle{every node}=[circle, draw=black, fill=white, inner sep=0pt, minimum width=5pt];
\filldraw[fill=black!20!white, draw=black, thin, dashed](0,0)circle(0.8cm);
\node (p1) at (-0.3,0) {};
\node (p2) at (0.3,0) {};
\draw[gray] (p1)--(p2);
\draw[gray](p1)--(-0.58,0.2);
\draw(p1)--(-0.58,-0.2);
\draw(p2)--(0.58,0.2);
\draw[gray](p2)--(0.58,-0.2);

\path(p1) edge [bend left=30] (p2);
\path[gray] (p1) edge [loop below, >=stealth,shorten >=1pt,looseness=20] (p1);
\draw[ultra thick, ->](1.5,0.1)--(2,0.1);
   \node [draw=white, fill=white] (a) at (1.7,-0.42)  {(b)};
\end{tikzpicture}
\hspace{0.1cm}   \begin{tikzpicture}[very thick,scale=0.8]
\tikzstyle{every node}=[circle, draw=black, fill=white, inner sep=0pt, minimum width=5pt];
\filldraw[fill=black!20!white, draw=black, thin, dashed](0,0)circle(0.8cm);
\node (po1) at (-0.3,0.3) {};
\node (pu1) at (-0.3,-0.3) {};
\node (p2) at (0.3,0) {};
\draw[gray](po1)--(p2);
\draw[gray](pu1)--(p2);
\draw(po1)--(pu1);
\draw[gray](po1)--(-0.58,0.4);
\draw(pu1)--(-0.58,-0.4);
\draw(p2)--(0.58,0.2);
\draw[gray](p2)--(0.58,-0.2);
\path (p2) edge [bend right=30] (po1);
\path[gray] (pu1) edge [loop below, >=stealth,shorten >=1pt,looseness=15] (pu1);
\end{tikzpicture}
\end{center}
\vspace{-0.2cm}
\caption{(a) Vertex-to-$K_4$-move. (b) Edge-to-$K_3$-move (vertex splitting). Gain labellings of edges are omitted.}
\label{fig:inductiveH3}
\end{figure}

\begin{definition}
An \emph{edge-to-$K_3$} move (also called \emph{vertex splitting} \cite{NOP,W2}) on a vertex $[v]$ which is incident to the edge $[v][u]$ with trivial gain and the edges $[v][u_i]$, $i=1,\ldots, t$ (which may include the edges $[v][u]$ and $[v][v]$ with gain $-1$), removes $[v]$ and its incident edges, and adds two new vertices $[v_0]$ and $[v_1]$ as well as the edges $[v_0][v_1]$, $[v_0][u]$ and $[v_1][u]$ with trivial gains.  Finally, each edge $[v][u_i]$ (with $[u_i]\neq [v]$), $i=1,\ldots, t$, is replaced by the edge $[v_0][u_i]$ or the edge $[v_1][u_i]$ so that the gain of the new edge $[v_j][u_i]$, $j\in \{0,1\}$, is the same as the gain of the deleted edge $[v][u_i]$. The loop at $[v]$ (if it exists) is replaced by a loop either at $[v_0]$ or $[v_1]$ with gain $-1$. 
\end{definition}

See Figure~\ref{fig:inductiveH3}(b).

For each of the above moves, an inverse move performed on a $(2,2,1)$-gain-tight signed quotient graph  is called \emph{admissible} if it results in another $(2,2,1)$-gain-tight signed quotient graph.

\begin{theorem} [Symmetrically isostatic graphs] \label{thm:forced2}
Let $\|\cdot\|_\P$ be a norm on $\bR^2$ for which $\P$ is a quadrilateral, and let $G$ be a $\mathbb{Z}_2$-symmetric graph where the action $\theta$ is free on the vertex set of $G$. Let $(G_0,\psi)$ be the signed quotient graph of $G$.
The following are equivalent.
\begin{enumerate}
\item[(i)] There exists a representation $\tau:\mathbb{Z}_2\to \Isom(\mathbb{R}^2)$, where $\tau(-1)$ is a reflection in the mirror $\ker\varphi_{F_1}$ along $\ker\varphi_{F_2}$, and a realisation $p$ such that the bar-joint framework $(G,p)$ is   well-positioned, $\mathbb{Z}_2$-symmetric and symmetrically isostatic in $(\bR^2,\|\cdot\|_\P)$;
\item[(ii)] $(G_0,\psi)$  is $(2,2,1)$-gain tight;
\item[(iii)] $(G_0,\psi)$ can be constructed from a  single unbalanced loop by a sequence of H1a,b,c-moves, H2a,b,c-moves, vertex-to-$K_4$ moves, and vertex splitting moves.
\end{enumerate}

\end{theorem}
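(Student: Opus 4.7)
The plan is to prove the cycle $(i)\Rightarrow(ii)\Rightarrow(iii)\Rightarrow(i)$.

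The implication $(i)\Rightarrow(ii)$ is a direct application of the sparsity counts developed in Section \ref{Sect:orbit}. Since $\tau(-1)$ is a reflection one has $\rank(I+\tau(-1))=1$, so Lemma \ref{lem:dimmotions} gives $\dim\T_1(G,p)=1$. Lemma \ref{Counts}(i) yields $|E_0|\geq 2|V_0|-1$, while Lemma \ref{lem:symcounts}(i) gives $|E(H_0)|\leq 2|V(H_0)|-1$ for every $\bZ_2$-symmetric subframework $(H,p)$, with the sharper bound $|E(H_0)|\leq 2|V(H_0)|-2$ when $H_0$ is balanced (since then $\dim\T(H_0,p)=2$). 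Applied globally the first bound forces $|E_0|=2|V_0|-1$, so $(G_0,\psi)$ is $(2,2,1)$-gain-tight.

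For $(iii)\Rightarrow(i)$ I would induct on $|V_0|$. The base case is a single vertex orbit $[v]$ with an unbalanced loop; the covering graph is $K_2$ with the fixed edge $v(-v)$, so Lemma \ref{K2Lemma} produces a symmetrically isostatic realisation. For the inductive step I would use Theorem \ref{thm:forced1} as the book-keeping device: an existing realisation comes equipped with a decomposition $E_0=E(G_{F_1,0})\sqcup E(G_{F_2,0})$ in which $G_{F_1,0}$ is a spanning unbalanced map graph and $G_{F_2,0}$ is a spanning tree, and for each construction move the new vertex orbits are placed so that the newly added edge orbits acquire prescribed framework colours. For H1a one new edge extends the tree and the other closes an unbalanced cycle; for H1b the two parallel edges receive a common colour and are attached either as a leaf-plus-cycle to $G_{F_1,0}$ or as a two-edge path in $G_{F_2,0}$ depending on which orbit needs enlargement; for H1c the loop receives colour $F_1$ while the pendant edge extends the tree; H2 moves are realised by a generic perturbation that preserves the colour of the subdivided edges and places the third edge into the appropriate monochrome subgraph; and the vertex-to-$K_4$ and vertex-splitting moves are handled analogously by exploiting the freedom to position the new vertices along suitable lines through the original point. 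In each case Theorem \ref{thm:forced1} then certifies symmetric isostaticity of the extended framework.

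The main obstacle is the combinatorial implication $(ii)\Rightarrow(iii)$, asserting that every $(2,2,1)$-gain-tight signed quotient graph on at least two vertex orbits admits an admissible inverse from the stated list. I would proceed by induction on $|V_0|$ via a minimum-degree case analysis. First, a short sparsity argument shows $G_0$ is connected and that the sum of degrees (counting loops twice) equals $4|V_0|-2$, so the minimum degree is $2$ or $3$. When $\delta(G_0)=2$, an inverse H1a, H1b or H1c move is immediate depending on whether the two incident edges are non-parallel, parallel, or include a loop; tightness is preserved because the removed vertex contributes at most $2$ to every subgraph count. When $\delta(G_0)=3$, one seeks an inverse H2 move at a degree-$3$ vertex $[v]$; in the spirit of the Euclidean $\bZ_2$-gain analogue \cite{jkt,schtan,MT2}, one shows that among the admissible pairings of the three incident edges — together with the allowed gain assignments designed to preserve unbalancedness of any resulting $2$-cycle — at least one avoids creating a blocking tight subgraph, so gain-tightness is retained. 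The residual configurations, in particular the all-degree-$4$ case and cases where no H2 inverse is admissible, are resolved by contracting a suitable $K_4$-substructure (inverse vertex-to-$K_4$) or by merging two adjacent degree-$3$ or degree-$4$ vertices joined by a triangle of trivial gains (inverse vertex splitting). Verifying that in every case at least one such inverse move is admissible — that is, that the reduced signed quotient graph is again $(2,2,1)$-gain-tight — is where the bulk of the combinatorial work lies and is the hardest step of the proof.
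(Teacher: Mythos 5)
Your overall architecture ($(i)\Rightarrow(ii)\Rightarrow(iii)\Rightarrow(i)$) and the tools you invoke match the paper's, and the implication $(i)\Rightarrow(ii)$ is complete. The genuine gap is in $(ii)\Rightarrow(iii)$. You correctly reduce to a vertex $[v]$ of degree $2$ or $3$ and dispose of the degree-$2$ case, but for the degree-$3$ case you only assert that at least one pairing admits an inverse H2 move and that the residual configurations are ``resolved by contracting a suitable $K_4$-substructure or by merging two adjacent vertices'', while conceding that verifying admissibility ``is where the bulk of the combinatorial work lies''. That verification is precisely the content of the theorem in this direction and cannot be deferred. The difficult configuration is a degree-$3$ vertex whose closed neighbourhood induces a balanced $K_4$: the inverse vertex-to-$K_4$ move may be blocked by a vertex $[x]$ outside the $K_4$ joined to two of its vertices by edges of equal gain, and one must then show that an inverse vertex-splitting move is admissible \emph{somewhere}, not necessarily at $[v]$. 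The paper does this by growing a maximal chain $A_1\subset A_2\subset\cdots\subset A_t$ of balanced subgraphs with $|E(A_i)|=2|V(A_i)|-2$, locating the last attached vertex $[w]$ and an incident edge $[w][i]$ that is not ``doubly attached'' from outside, and then ruling out, via the $(2,2,1)$-gain-sparsity counts and the balancedness of unions (\cite[Lemma~2.5]{jkt}), any tight or balanced-tight subgraph that would obstruct the inverse edge-to-$K_3$ move on $[w][i]$. Nothing in your sketch identifies where the splitting inverse is performed or why it is admissible, so this implication is not established.

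A smaller but real error occurs in your $(iii)\Rightarrow(i)$ treatment of the H1b move. You propose that ``the two parallel edges receive a common colour''. If both parallel edges $[v][w]$ (gains $1$ and $-1$) were placed in $G_{F_1,0}$ they would form an unbalanced $2$-cycle at $[v]$, giving the component of $[w]$ a second independent cycle and destroying the unbalanced map-graph property, while $G_{F_2,0}$ would then fail to span; if both were placed in $G_{F_2,0}$ the ``tree'' would contain a cycle. The correct placement (exactly as for H1a) puts $p_{\tv}$ near the intersection of the line through $p_{\tw}$ in direction $x_1$ with the line through $\tau(-1)p_{\tw}$ in direction $x_2$, so that one parallel edge is coloured $[F_1]$ (a pendant edge on the map graph) and the other $[F_2]$ (a leaf of the tree). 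The remainder of your $(iii)\Rightarrow(i)$ sketch follows the paper's strategy and is fine in outline.
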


\begin{proof} $(i) \Rightarrow (ii)$. Suppose $(G,p)$ is a well-positioned  symmetrically isostatic framework in $(\bR^2,\|\cdot\|_\P)$. Then we clearly have  $|E_0|= 2|V_0|-1$ since, by Lemma~\ref{lem:dimmotions}, the space of  symmetric infinitesimal trivial flexes is of dimension 1 (spanned by the infinitesimal translation along the mirror). Similarly, by Lemma~\ref{lem:symcounts}, there does not exist an  edge subset $F$ of $E_0$ with $|F|>2|V(F)|-1$, for otherwise the symmetric orbit  matrix of $(G_0,\psi)$ would have a row dependence.
So it remains to show that  we have  $|F|\leq 2|V(F)|-2$ for every balanced edge subset $F$. However, this also follows immediately from Lemma~\ref{lem:symcounts}. 

$(ii)\Rightarrow(iii)$. Suppose $(G_0,\psi)$  is $(2,2,1)$-gain tight. If $(G_0,\psi)$ is a single unbalanced loop, then we are done. So suppose $(G_0,\psi)$ has more than two vertices. Then $(G_0,\psi)$ has a vertex $[v]$ of degree $2$ or $3$. If there exists a vertex $[v]$  which is incident to two edges (one of which may be a loop), then there clearly exists an admissible inverse H1a,b- or c-move. If there is no such vertex, then there is a vertex $[v]$ which is incident to three non-loop edges, and $[v]$ has either two or three neighbours. If $[v]$ has two neighbours $[a]$ and $[b]$, and $[v],[a],[b]$ induce a graph with $5$ edges (i.e., a $2K_3-[e]$), then there exists an admissible inverse H2c-move. Otherwise, we may use the argument in \cite{anbs} for $(2,2,1)$-gain-tight signed graphs to show that there exists an admissible inverse H2b-move. If $[v]$ has three distinct neighbours, then it was again shown in \cite{anbs} that there exists an admissible inverse H2a-move for $[v]$, unless $[v]$ and its three neighbours $[a],[b]$ and $[c]$ induce a $K_4$ in $(G_0,\psi)$ with gain $1$ on every edge (plus possibly an additional edge with gain $-1$).

In this case there is an admissible inverse vertex-to-$K_4$ move, unless there exists a vertex $[x] \notin V(K_4)$ such that $[x][a]$ and $[x][b]$ are edges in $(G_0,\psi)$ which have the same gain.   
Let $A_0$ denote the $K_4$ and let $A_1$ be the graph consisting of $A_0$ together with the vertex $[x]$ and the edges $[x][a]$ and $[x][b]$.
By switching $[x]$, we may assume that the gains of $[x][a]$ and $[x][b]$ are both $1$. Note that  $[x][a]$ and $[x][b]$ cannot both have a parallel edge, and so, without loss of generality, we assume that the edge $[x][a]$ with gain $-1$ is not present. 

If there exists a vertex $[y]\notin V(A_1)$ and edges $[y][a]$ and $[y][x]$ with the same gain then let $A_2$ denote the union of $A_1$ with $[y]$ and these two edges (see Fig.~\ref{fig:pf}). By switching $[y]$ we may assume that all edges in $A_2$ have gain $1$.
Again, note that  $[y][a]$ and $[y][x]$ cannot both have a parallel edge, and so, without loss of generality, we assume that the edge $[y][a]$ with gain $-1$ is not present. If there exists a vertex $[z]\notin V(A_2)$ and edges $[z][y]$ and $[z][a]$ with the same gain then let $A_3$ denote the union of $A_2$ with $[z]$ and these two edges. Continuing this process we obtain an increasing sequence of subgraphs $A_1,A_2,A_3,\ldots$ of $G_0$ each of which is balanced and satisfies $|E(A_i)|=2|V(A_i)|-2$. This sequence must terminate after finitely many iterations at a subgraph $A_t$ of $G_0$.
Let $[w]$ be the vertex in $A_t\backslash A_{t-1}$ and suppose $[w]$ is incident to the vertices $[i]$ and $[j]$ in $A_{t-1}$.
By switching $[w]$ we may assume that all edges in $A_t$ have gain $1$.
By construction, one of the edges incident to $[w]$ in $A_t$, $[w][i]$ say, does not have a parallel edge and has the property that there is no vertex $[k]\notin V(A_t)$ which is adjacent to both $[w]$ and $[i]$ such that the edges $[k][w]$ and $[k][i]$ both have the same gain.

Clearly, there cannot exist a subgraph $H_0$ of $(G_0,\psi)$ with $|E(H_0)|=2|V(H_0)|-1$ which contains $[w]$ and $[i]$, but not $[j]$, for otherwise $A_{t}\cup H_0$ violates the $(2,2,1)$-gain-sparsity counts. To see this note that 
$|E(A_{t-1}\cup H_0)|=2|V(A_{t-1}\cup H_0)|-1$ and $A_{t}\cup H_0$
 is obtained by adjoining the edge $[w][j]$ to $A_{t-1}\cup H_0$.
Similarly,  there cannot exist a balanced subgraph $H_0$ of $(G_0,\psi)$ with $|E(H_0)|=2|V(H_0)|-2$ which contains $[w]$ and $[i]$, but not $[j]$.
To see this,  note that $A_t\cap H_0$ must be connected since otherwise $A_t\cup H_0$ violates the $(2,2,1)$-gain-sparsity counts.
By \cite[Lemma~2.5]{jkt}, $A_t\cup H_0$ is balanced and so, by Lemma \ref{switch}, we may assume every edge in $A_t\cup H_0$ has gain $1$. Note that $A_{t-1}$ and $H_0$ have a non-empty (balanced) intersection. 
Therefore, $|E(A_{t-1}\cup H_0)|=2|V(A_{t-1}\cup H_0)|-2$. However, if we add the edge $[w][j]$ to $A_{t-1}\cup H_0$, then this creates a balanced subgraph of $G_0$ which violates the $(2,2,1)$-gain-sparsity counts.
It follows that an inverse edge-to-$K_3$ move on the edge $[w][i]$ is admissible.

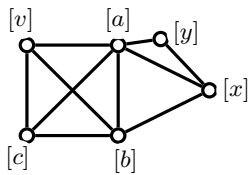
\begin{figure}[htp]
\begin{center}
\begin{tikzpicture}[very thick,scale=0.8]
\tikzstyle{every node}=[circle, draw=black, fill=white, inner sep=0pt, minimum width=5pt];
\node(p1) at (0,0) {};
\node(p2) at (1.5,0) {};
\node (p3) at (1.5,1.5) {};
\node(p4) at (0,1.5) {};
\node(p5) at (3,0.75) {};
\node(p6) at (2.2,1.6) {};

 \node [draw=white, fill=white] (a) at (-0.15,-0.38)  {\scriptsize$[c]$};
 \node [draw=white, fill=white] (a) at (-0.1,1.92)  {\scriptsize$[v]$};
 \node [draw=white, fill=white] (a) at (1.62,-0.42)  {\scriptsize$[b]$};
 \node [draw=white, fill=white] (a) at (1.5,1.92)  {\scriptsize$[a]$};

 \node [draw=white, fill=white] (a) at (3.44,0.75)  {\scriptsize$[x]$};
 \node [draw=white, fill=white] (a) at (2.62,1.7)  {\scriptsize$[y]$};

\draw(p1)--(p2);
\draw(p2)--(p3);
\draw(p3)--(p4);
\draw(p1)--(p4);
\draw(p3)--(p1);
\draw(p2)--(p4);
\draw(p2)--(p5);
\draw(p3)--(p5);

\draw(p6)--(p5);
\draw(p6)--(p3);

\end{tikzpicture}
\end{center}
\vspace{-0.2cm}
\caption{Illustration of the subgraph $A_2$ in the proof of Theorem~\ref{thm:forced2} (ii) $\Rightarrow$ (iii). All edges have gain $1$. }
\label{fig:pf}
\end{figure}

$(iii)\Rightarrow (i)$. We employ induction on the number of vertices of $G_0$. If $G_0$ is a single unbalanced loop with vertex $[v]$, choose $p_v\notin \ker \varphi_{F_1}$ and set $p_{-v}=\tau(-1)p_v$. Then $(G,p)$ is well-positioned and $\bZ_2$-symmetric and so the statement holds  by Lemma~\ref{K2Lemma}.

Now, let $n\geq 2$, and suppose $(i)$ holds for all signed quotient   graphs satisfying $(iii)$ with at most $n-1$ vertices. Let $(G_0,\psi)$ have $n$ vertices, and let $(G'_0,\psi')$ be the penultimate graph in the construction sequence of $(G_0,\psi)$. If $(G'_0,\psi')$ is a single unbalanced loop, then $(G_0,\psi)$ is obtained from $(G'_0,\psi')$ by a H1b-,  H1c-, or  vertex-to-$K_4$ move. The loop of $G'_0$ belongs to the induced monochrome subgraph $G'_{F_1,0}$ of $G'_0$, and for each of the three moves, it is easy to see how to place the new vertex (vertices) so that the  induced monochrome subgraphs $G_{F_1,0}$ and $G_{F_2,0}$ of $G_0$ have the property that $G_{F_1,0}$ is a spanning unbalanced map graph  and $G_{F_2,0}$ is a spanning tree of $G_0$ (see also the discussion below). The result then follows from Theorem~\ref{thm:forced1}.  Thus, we may assume that $G'_0$ has at least two vertices.

In this case, it follows from the induction hypothesis and Theorem~\ref{thm:forced1} that there exists a well-positioned $\mathbb{Z}_2$-symmetric realisation $p'$ of the covering graph $G'$ of $(G'_0,\psi')$  in $(\bR^2,\|\cdot\|_\P)$ (where the reflection $\tau(-1)$ is in the mirror $\ker\varphi_{F_1}$)  so that the induced monochrome subgraphs $G'_{F_1,0}$ and $G'_{F_2,0}$ of $G'_0$ are both spanning, 
$G'_{F_1,0}$ is an unbalanced map graph,  and $G'_{F_2,0}$ is a tree. By Theorem~\ref{thm:forced1} it now suffices to show that the vertex (or vertices) of $G\setminus G'$ can be placed in such a way that the corresponding framework $(G,p)$ is $\mathbb{Z}_2$-symmetric and well-positioned, the induced monochrome subgraphs $G_{F_1,0}$ and $G_{F_2,0}$ are both spanning in $G_0$, $G_{F_1,0}$ is an unbalanced map graph, and $G_{F_2,0}$ is a tree.

Choose points $x_1$ and $x_2$ in the relative interiors of $F_1$ and $F_2$ respectively.
Suppose first that $(G_0,\psi)$ is obtained from $(G'_0,\psi')$ by a H1a-move, where $[v]\in G_0\setminus G'_0$ is adjacent to the vertices $[v_1]$ and $[v_2]$ of $G'_0$ with respective gains $\gamma_1$ and $\gamma_2$. 
Set $p_w=p_w'$ for all vertices $w$ in $G$ with $[w]\not=[v]$.
Let $a\in \bR^2$ be the point of intersection of the lines $L_1=\{\tau(\gamma_1)p_{\tv_1}+tx_1:t\in \bR\}$ and $L_2=\{\tau(\gamma_2)p_{\tv_2}+tx_2:t\in \bR\}$ and let $B(a,r)$ be an open ball with centre $a$ and radius $r>0$. Choose $p_{\tv}$ to be any point in $B(a,r)$ which is distinct from $\{p_w:w\in V(G')\}$ and which is not fixed by $\tau(-1)$. Set $p_{-\tv}=\tau(-1)p_{\tv}$.
Then $(G,p)$ is a $\bZ_2$-symmetric bar-joint framework and, by  applying a small perturbation to $p_{\tilde{v}}$ if necessary, we may assume that $(G,p)$ is well-positioned. 
If $r$ is sufficiently small then the induced framework colours for $[v][v_1]$ and $[v][v_2]$ are $[F_1]$ and $[F_2]$ respectively. 
Thus, the induced monochrome subgraphs of $(G_0,\psi)$ are $G_{F_1,0}=G'_{F_1,0}\cup \{[v][v_1]\}$ and $G_{F_2,0}=G'_{F_2,0}\cup \{[v][v_2]\}$.  Clearly, $G_{F_1,0}$  is a spanning unbalanced map graph and $G_{F_2,0}$ is a spanning tree of $G_0$. For an illustration of the monochrome subgraphs of the signed quotient graph see Fig.~\ref{fig:inductive}(a). The edges of $G_{F_1,0}$ are shown in gray and the edges of $G_{F_2,0}$ are shown in black.

If $(G_0,\psi)$ is obtained from $(G'_0,\psi)$ by a H1b-move, then the proof is completely analogous to the proof above. (See Fig.~\ref{fig:inductive}(b)).

Suppose $(G_0,\psi)$ is obtained from $(G'_0,\psi')$ by a H1c-move, where $[v]\in G_0\setminus G'_0$ is incident to the unbalanced loop $[e]$ and  adjacent to the vertex $[w]$ of $(G'_0,\psi')$ with gain $\gamma$.  If we choose $p_{\tv}$ to be any point on the line $L=\{\tau(\gamma)p_{\tw}+tx_2:t\in \bR\}$, then the induced framework colouring for $[v][w]$ is $[F_2]$. Moreover, as we have seen before, the induced framework colouring for the loop $[e]$ is $[F_1]$. It follows that we may place $\tv$ and $-\tv$ in such a way that  $(G,p)$ is well-positioned and $\bZ_2$-symmetric, and the induced monochrome subgraphs of $G_0$ are $G_{F_1,0}=G'_{F_1,0}\cup \{[e]\}$ and $G_{F_2,0}=G'_{F_2,0}\cup \{[v][w]\}$.  Clearly, $G_{F_1,0}$ is  an unbalanced spanning map graph and $G_{F_2,0}$ is a spanning tree of $(G_0,\psi)$.
(See Fig.~\ref{fig:inductive}(c)).

Next, we suppose that $(G_0,\psi)$ is obtained from $(G'_0,\psi')$ by a H2a-move where $[v]\in G_0\setminus G'_0$ subdivides the edge $[e]$ into the edges $[e_1]$ and $[e_2]$ with respective gains $\gamma_1$ and $\gamma_2$, and $[v]$ is also incident to the edge $[e_3]$ with end-vertex $[z]$ and gain $\gamma_3$. Without loss of generality we may assume that $[e]\in G'_{F_1,0}$. Let $a\in \bR^2$ be the point of intersection of the line $L_1$ which passes through the points $\tau(\gamma_1)p_{\tv_1}$ and $\tau(\gamma_2)p_{\tv_2}$ with $L_2=\{\tau(\gamma_3)p_{\tz}+tx_2:t\in\bR\}$.
Let $B(a,r)$ be the open ball with centre $a$ and radius $r>0$ and choose $p_{\tv}$ to be a point in $B(a,r)$ which is distinct from $\{p_w:w\in G'\}$ and which is not fixed by $\tau(-1)$.
Set $p_{-\tv}=\tau(-1)p_{\tv}$. As above, $(G,p)$ is $\bZ_2$-symmetric and we may assume it is well-positioned.
If $r$ is sufficiently small then  $[e_1]$ and $[e_2]$ have induced framework colour $[F_1]$ and $[e_3]$ has framework colour $[F_2]$.
The induced monochrome subgraphs of $G_0$ are $G_{F_1,0}=(G'_{F_1,0}\backslash\{[e]\})\cup \{[e_1],[e_2]\}$ and $G_{F_2,0}=G'_{F_2,0}\cup \{[e_3]\}$. Clearly, $G_{F_1,0}$ is a spanning unbalanced map graph and $G_{F_2,0}$ is a spanning tree of $G_0$. (See Fig.~\ref{fig:inductiveH2}(a)).

The cases where $(G_0,\psi)$ is obtained from $(G'_0,\psi')$ by a H2b- or a H2c-move can be proved completely  analogously to the case above for the H2a-move. Note, however, that for the H2c-move, the edges $[e_1]$ and $[e_2]$ are forced to be in the subgraph $G_{F_1,0}$. (See Fig.~\ref{fig:inductiveH2}(b),(c)).

Next, we suppose that $(G_0,\psi)$ is obtained from $(G'_0,\psi')$ by a vertex-to-$K_4$-move, where the vertex $[v]$ of  $G'_0$ (which may be incident to an unbalanced loop $[e]$) is replaced by a copy of $K_4$ with a trivial gain labelling (and $[e]$ is replaced by the edge $[f]$).  It was shown in \cite[Ex.~4.5]{kit-pow} that $K_4$ has a well-positioned and isostatic placement in $(\bR^2,\|\cdot\|_\P)$. Moreover, we may scale this realisation  so that all of the vertices of the $K_4$ lie in a ball of arbitrarily small radius. For any such  realisation, the induced monochrome subgraphs of $K_4$ are both paths of length $3$. 
Let $B(p_{\tv},r)$ be the open  ball with centre $p_{\tv}$ and radius $r>0$. Choose a placement of the representative vertices of the new $K_4$ to lie within $B(p_{\tv},r)$ such that the vertices are distinct from $\{p_w:w\in V(G')\backslash\{\tv,-\tv\}\}$, none of the vertex placements are fixed by $\tau(-1)$ and the resulting placement of the new $K_4$ is isostatic. If $r$ is sufficiently small then the edge  $[f]$ (if present) has the induced framework colour $[F_1]$.
It can be assumed that the corresponding $\bZ_2$-symmetric placement of $G$ is well-positioned. Moreover, the induced monochrome subgraphs $G_{F_1,0}$ and $G_{F_2,0}$ of $G_0$ clearly have the desired properties. (See Fig.~\ref{fig:inductiveH3}(a)).

Finally, we suppose that $(G_0,\psi)$ is obtained from $(G'_0,\psi')$ by an edge-to-$K_3$-move, where the vertex $[v]$ of  $G'_0$ (which  is replaced by the vertices $[v_0]$ and $[v_1]$) is incident to the edge $[v][u]$ with trivial gain and the edges $[v][u_i]$, $i=1,\ldots, t$, in $G'_0$. Without loss of generality we may assume that $[v][u]\in G'_{F_1,0}$. If we choose $p_{\tv_0}=p_{\tv}$ and $p_{\tv_1}$ to be a point on the line $L=\{p_{\tv}+tx_2:t\in \bR\}$ which is sufficiently close to $p_{\tv}$, then  the induced framework colour for $[v_0][v_1]$ is $[F_2]$ and the induced framework colour for $[v_0][u]$ and $[v_1][u]$ is $[F_1]$. (Again we may assume the framework is well-positioned). Moreover, all other edges of $G'_0$ which have been replaced by new edges in $G_0$ clearly retain their induced framework colouring if $p_{\tv_1}$   is chosen sufficiently close to $p_{\tv}$.   It is now easy to see that for such a  placement of $\tv_0$ and $\tv_1$, $(G,p)$ is $\bZ_2$-symmetric and for the induced monochrome subgraphs  $G_{F_1,0}$ and $G_{F_2,0}$   of $G_0$ we have that $G_{F_1,0}$ is  a spanning unbalanced  map graph and $G_{F_2,0}$ is a spanning tree of $(G_0,\psi)$. (See Fig.~\ref{fig:inductiveH3}(b)).
This completes the proof.    

\end{proof}

\begin{figure}[htp]
\begin{center}
\begin{tikzpicture}[very thick,scale=0.8]
\tikzstyle{every node}=[circle, draw=black, fill=white, inner sep=0pt, minimum width=5pt];

\node (p1) at (0,0) {};
\node (p2) at (4,0) {};
\node (p1o) at (1,0.5) {};
\node (p2o) at (3,0.5) {};
\node (p1l) at (0.5,1.5) {};
\node (p2r) at (3.5,1.5) {};
\draw[gray](p1)--(p1o);
\draw[gray](p2)--(p2o);
\draw[gray](p1)--(p2o);
\draw[gray](p2)--(p1o);
\draw(p1l)--(p1o);
\draw[gray](p1l)--(p2o);
\draw(p1l)--(p1);
\draw(p2r)--(p2o);
\draw[gray](p2r)--(p1o);
\draw(p2r)--(p2);
\draw[dashed,thin](2,-0.3)--(2,2);
   \node [draw=white, fill=white] (a) at (2,-0.9)  {(a)};

 \node [draw=white, fill=white] (a) at (4.8,0.5)  {\tiny $1$};
 \node [draw=white, fill=white] (a) at (5.3,0.6)  {\tiny $1$};

 \node [draw=white, fill=white] (a) at (5.29,0.3)  {\tiny $1$};
 \node [draw=white, fill=white] (a) at (5.6,1.2)  {\tiny $-1$};

 \node [draw=white, fill=white] (a) at (5.6,-0.2)  {\tiny $-1$};

\node(p11) at (5,0) {};
\node(p22) at (5,1) {};
\node (p33) at (6,0.5) {};
\draw(p22)--(p11);
\draw(p22)--(p33);
\draw[gray](p33)--(p11);
\path(p11) edge [gray,bend right=40] (p33);
\path(p33) edge [gray,bend right=40] (p22);
\node [draw=white, fill=white] (a) at (5.5,-0.9)  {(b)};

\node (p1) at (9.5,0) {};
\node (p2) at (10.5,0) {};
\node (p1o) at (9.5,1.5) {};
\node (p2o) at (10.5,1.5) {};
\node (p1l) at (8.5,0) {};
\node (p2r) at (11.5,0) {};
\draw(p1)--(p1o);
\draw(p2)--(p2o);
\draw(p1)--(p2o);
\draw(p2)--(p1o);
\draw(p1l)--(p1o);
\draw[gray](p1l)--(p2o);
\draw[gray](p1l)--(p1);
\draw(p2r)--(p2o);
\draw[gray](p2r)--(p1o);
\draw[gray](p2r)--(p2);
\draw[dashed,thin](10,-0.3)--(10,2);
   \node [draw=white, fill=white] (a) at (10,-0.9)  {(c)};

 \node [draw=white, fill=white] (a) at (14.6,0.4)  {\tiny $-1$};
 \node [draw=white, fill=white] (a) at (13.8,0.4)  {\tiny $1$};

 \node [draw=white, fill=white] (a) at (12.9,0.7)  {\tiny $-1$};
 \node [draw=white, fill=white] (a) at (13.4,0.59)  {\tiny $1$};

 \node [draw=white, fill=white] (a) at (13.5,-0.2)  {\tiny $1$};

\node (p11) at (13,0) {};
\node (p22) at (14,0) {};
\node (p33) at (14,1) {};
\draw[gray](p11)--(p22);
\draw(p22)--(p33);
\draw(p11)--(p33);
\path(p22) edge [bend right=40] (p33);
\path[gray](p11) edge [bend left=40] (p33);
 \node [draw=white, fill=white] (a) at (13.5,-0.9)  {(d)};

\end{tikzpicture}
\end{center}
\vspace{-0.2cm}
\caption{A  symmetrically isostatic (but not anti-symmetrically isostatic) reflection framework in $(\mathbb{R}^2,\|\cdot\|_\infty)$ (a) and its signed quotient graph $(G_0,\psi)$ (b).  An anti-symmetrically isostatic (but not symmetrically isostatic) reflection framework in $(\mathbb{R}^2,\|\cdot\|_\infty)$ (c) with the same signed quotient graph $(G_0,\psi)$. The edges of the induced monochrome subgraphs $G_{F_1}$ and  $G_{F_1,0}$ are shown in gray colour. $(G_0,\psi)$ does not admit an infinitesimally rigid realisation in $(\mathbb{R}^2,\|\cdot\|_\infty)$ with reflection symmetry since $|E_0|<2|V_0|$.}
\label{fig:antisym}
\end{figure}
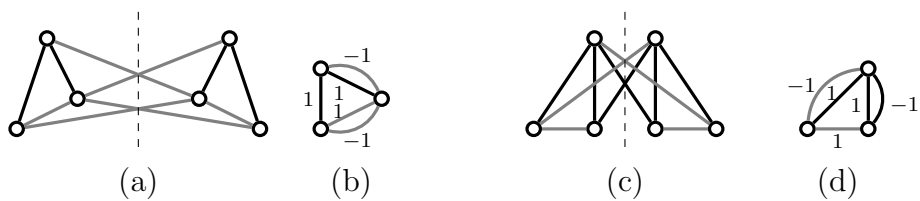

\begin{example} \label{ex:2k3minusedge} The smallest signed quotient graph $(G_0,\psi)$ whose covering graph $G$ can be realised as a $\mathbb{Z}_2$-symmetric framework in $(\mathbb{R}^2,\|\cdot\|_\P)$ which is anti-symmetrically isostatic is the graph $2K_3-\te$ shown in Figure~\ref{fig:antisym} (b,d).   Figure~\ref{fig:antisym} (c) illustrates such a realisation $(G,p)$ in $(\mathbb{R}^2,\|\cdot\|_\infty)$.
To obtain a realisation $(G,\tilde{p})$ in $(\mathbb{R}^2,\|\cdot\|_\P)$ construct a linear isometry $T:(\mathbb{R}^2,\|\cdot\|_\infty) \to 
(\mathbb{R}^2,\|\cdot\|_\P)$ and set $\tilde{p}_v=T(p_v)$ for each $v\in V$. 
\end{example}

A \emph{$2K_3-[e]$ edge joining} move joins a signed quotient graph $2K_3-[e]$  to $(G_0,\psi)$ via one new edge of arbitrary gain, where $2K_3-[e]$ consists of $3$ vertices and $5$ edges.

\begin{theorem} [Anti-symmetrically isostatic graphs] \label{thm:anti2}
Let $\|\cdot\|_\P$ be a norm on $\bR^2$ for which $\P$ is a quadrilateral, and let $G$ be a $\mathbb{Z}_2$-symmetric graph with respect to the action $\theta$ which is free on the vertex set of $G$. Let $(G_0,\psi)$ be the signed quotient  graph of $G$.
The following are equivalent.
\begin{enumerate}
\item[(i)] There exists a representation $\tau:\mathbb{Z}_2\to \Isom(\mathbb{R}^2)$, where $\tau(-1)$ is a reflection in the mirror $\ker\varphi_{F_1}$ along $\ker\varphi_{F_2}$, and a realisation $p$ such that the bar-joint framework $(G,p)$ is   well-positioned, $\mathbb{Z}_2$-symmetric and anti-symmetrically isostatic in $(\bR^2,\|\cdot\|_\P)$;

\item[(ii)] $(G_0,\psi)$ has no loops and is $(2,2,1)$-gain tight;

\item[(iii)] $(G_0,\psi)$ can be constructed from $2K_3-[e]$ by a sequence of H1a,b-moves, H2a,b-moves, vertex-to-$K_4$ moves, vertex splitting moves and $2K_3-[e]$ edge joining moves.

\end{enumerate}

\end{theorem}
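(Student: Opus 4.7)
The plan is to follow the template of Theorem~\ref{thm:forced2}, modifying the arguments to account for (a) the exclusion of loops and the replacement of the base graph, (b) the role swap of $G_{F_1,0}$ and $G_{F_2,0}$ demanded by Theorem~\ref{thm:antisym1}, and (c) the extra $2K_3-[e]$ edge joining move.

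\smallskip

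For $(i)\Rightarrow (ii)$, I would first apply Lemma~\ref{AntiSymFixedEdges}: anti-symmetric isostaticity forbids fixed edges in $G$, which is equivalent to saying that $G_0$ has no loops. Since $\tau(-1)$ is a reflection, Lemma~\ref{lem:dimmotions} gives $\dim\T_2(G,p)=\rank(I-\tau(-1))=1$, so Lemma~\ref{Counts} combined with the edge count of an isostatic framework yields $|E_0|=|E_0'|=2|V_0|-1$. The upper bounds in Lemma~\ref{lem:symcounts} then give the full $(2,2,1)$-gain-tight conditions: $|F|\leq 2|V(F)|-1$ in general (since $\dim\T_2(H,p)\ge 1$ for any $\bZ_2$-symmetric $H$), and $|F|\leq 2|V(F)|-2$ on balanced subsets (since $\dim\T(H_0,p)=2$ in $\bR^2$).

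\smallskip

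For $(ii)\Rightarrow (iii)$, I would run the same inductive scheme as in Theorem~\ref{thm:forced2}, using an average-degree argument to locate a vertex $[v]$ of degree $\leq 3$. The moves H1c and H2c are ruled out since loops are forbidden, so the case analysis simplifies to H1a,b and H2a,b; when none of these inverse moves is admissible we appeal to inverse vertex-to-$K_4$ or edge-to-$K_3$ moves exactly as before, via the increasing-sequence-of-subgraphs argument used in the symmetric proof. The genuinely new obstruction is that the base graph is no longer a single loop, and a blocking configuration may now be a pendant copy of $2K_3-[e]$ attached by a single edge; this is precisely the situation resolved by the inverse $2K_3-[e]$ edge joining move, and the count $|V|=3$, $|E|=5$ of $2K_3-[e]$ shows the inverse preserves $(2,2,1)$-gain tightness and losslessness. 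I expect this step to be the main obstacle, as one must rule out intermediate subgraphs that would be destroyed by each candidate inverse move, mirroring the $A_1, A_2, \ldots, A_t$ escalation in the proof of Theorem~\ref{thm:forced2}, and verify that at least one move from the enlarged list always applies.

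\smallskip

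For $(iii)\Rightarrow (i)$, I would induct on $|V_0|$. The base case is handled by Example~\ref{ex:2k3minusedge}, which exhibits an anti-symmetrically isostatic realisation of the covering graph of $2K_3-[e]$ in $(\bR^2,\|\cdot\|_\P)$. For the inductive step, by Theorem~\ref{thm:antisym1} it suffices to maintain, after each move, a well-positioned $\bZ_2$-symmetric realisation in which $G_{F_1,0}$ is a spanning tree and $G_{F_2,0}$ is a spanning unbalanced map graph. The geometric realisations of the H1a,b, H2a,b, vertex-to-$K_4$ and vertex splitting moves are carried out exactly as in the proof of Theorem~\ref{thm:forced2}, with the colour roles $F_1\leftrightarrow F_2$ swapped (e.g.\ new pendant edges are forced into $G_{F_1,0}$ now). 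For the $2K_3-[e]$ edge joining move, place the three new vertex-orbit representatives inside a small ball around a generic point, using a scaled isostatic realisation of $2K_3-[e]$ (which exists by the base case), and choose the position so that the one connecting edge lies in the cone of $F_2$; this preserves the tree/unbalanced-map-graph partition of the induced monochrome subgraphs and completes the induction.
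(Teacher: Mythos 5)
Your outline follows the paper's strategy, but two points need attention. The first is a concrete error in the $(iii)\Rightarrow(i)$ step: for the $2K_3-[e]$ edge joining move you propose to place the new component so that the connecting edge $[f]$ has framework colour $[F_2]$. By Theorem~\ref{thm:antisym1} the target decomposition is: $G_{F_1,0}$ a spanning \emph{tree} and $G_{F_2,0}$ a spanning unbalanced map graph. Putting $[f]$ into $G_{F_2,0}$ leaves $G_{F_1,0}=G'_{F_1,0}\sqcup G''_{F_1,0}$ disconnected (so not a spanning tree) and creates a component of $G_{F_2,0}$ containing two independent cycles (so not a map graph). The joining edge must receive colour $[F_1]$; the paper achieves this by translating the realisation of the $2K_3-[e]$ piece along the mirror $\ker\varphi_{F_1}$ until $\hat q_{\tv}$ lies on the line through $p_{\tu}$ and $\tau(-1)p_{\tu}$ (whose direction lies in $\ker\varphi_{F_2}$, hence in the cone of $\pm F_1$). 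Note also that placing the three new orbit representatives ``in a small ball around a generic point'' does not in general respect the reflection symmetry: the new component must itself be mirror-symmetric about $\ker\varphi_{F_1}$, which is why the paper only allows a translation along the mirror.

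The second issue is that in $(ii)\Rightarrow(iii)$ you correctly identify the pendant-$2K_3-[e]$ configuration as the new obstruction but explicitly leave open the claim that such a pendant copy, attached by \emph{exactly one} edge, always exists. This is the genuinely new combinatorial content of the theorem and cannot be waved through: after disposing of the H1a,b, H2a,b, vertex-to-$K_4$ and vertex-splitting cases one only knows that every vertex of degree $3$ lies in some copy of $2K_3-[e]$. The paper closes the gap with a counting argument: setting $f(H)=2|V(H)|-|E(H)|$, the copies $Y_1,\dots,Y_k$ of $2K_3-[e]$ are pairwise disjoint with $f(Y_i)=1$, the remaining vertices $W_0$ all have degree at least $4$, and $f(G_0)=1$ forces $|F_0|=2|W_0|+k-1$ for the set $F_0$ of remaining edges; if every $Y_i$ met $F_0$ in at least two edges, an incidence count would give $|F_0|\ge 2|W_0|+k$, a contradiction. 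Without this (or an equivalent) argument the induction in $(ii)\Rightarrow(iii)$ is incomplete. The $(i)\Rightarrow(ii)$ direction of your proposal is fine and matches the paper.
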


\begin{proof}  $(i) \Rightarrow (ii)$. Suppose $(G,p)$ is a well-positioned anti-symmetrically isostatic framework in $(\bR^2,\|\cdot\|_\P)$.  Then, by Lemma~\ref{AntiSymFixedEdges}, $(G_0,\psi)$ cannot contain a loop.  
 The rest of the proof is completely analogous to the proof of Theorem~\ref{thm:forced2} ((i) $\Rightarrow$ (ii)), since the space of anti-symmetric infinitesimal trivial flexes is also of dimension 1, by Lemma~\ref{lem:dimmotions}.  

$(ii) \Rightarrow (iii)$. Suppose $(G_0,\psi)$  is $(2,2,1)$-gain tight with no loops. If $(G_0,\psi)$ is a $2K_3-[e]$, then we are done. So suppose $(G_0,\psi)$ has more than three vertices. Then $(G_0,\psi)$ has a vertex $[v]$ of degree $2$ or $3$. It was shown in \cite{anbs} that there exists an admissible inverse Henneberg 1a,b- or 2a,b-move for $[v]$, unless $[v]$ either  has three distinct neighbours $[a], [b]$ and $[c]$ in $(G_0,\psi)$ and  $[v], [a], [b], [c]$ induce a $K_4$ with gain $1$ on every edge (plus possibly an additional edge with gain $-1$) or  $[v]$ has two distinct neighbors $[a]$ and $[b]$,  and  $[v], [a], [b]$ induce a  $2K_3-[e]$.

In the first case, there is an admissible inverse vertex-to-$K_4$ move or an admissible inverse vertex splitting move, as shown in the proof of Theorem~\ref{thm:forced2} ($(ii) \Rightarrow (iii)$). Thus, we may assume that every vertex of degree $3$ is in a copy of $2K_3-[e]$. But now we may use a similar argument as in the proof for the characterisation of $(2,2,1)$-gain-tight signed quotient graphs given  in \cite{anbs} (see also \cite[Lemma 4.10]{nixowen}) to show that at least one of the copies of $2K_3-[e]$ has the property that there is exactly one edge which joins a vertex $[x]\notin 2K_3-[e]$ with a vertex in $2K_3-[e]$. For a signed quotient graph $(H,\phi)$ with vertex set $V(H)$ and edge set $E(H)$, we define $f(H)=2|V(H)|-|E(H)|$. Let $Y=\{Y_1,\ldots, Y_k\}$ be the copies of $2K_3-[e]$ in $(G_0,\psi)$. Then the $Y_i$ are pairwise disjoint and satisfy $f(Y_i)=1$ for all $i$. Let $W_0$ and $F_0$ be the sets of vertices and edges of $(G_0,\psi)$ which do not belong to any of the $Y_i$. Then we have $f(G_0)=\sum_{i=1}^k f(Y_i)+2|W_0|-|F_0|$, and since $f(G_0)=1$, $|F_0|=2|V_0|+k-1$. Every vertex in $W_0$ is of degree at least $4$. So if every $Y_i$ is incident to at least two edges in $F_0$, then there are at least $4|W_0|+2k$ edge-vertex incidences for the edges in $F_0$. But then we have $|F_0|\geq 2|W_0|+k$, a contradiction. If there exists a $Y_i$ with the property that none of the vertices of $Y_i$ are incident with an edge in $F_0$, then $G_0=Y_i$, contradicting our assumption that $G_0$ has more than $3$ vertices. It follows that there exists an inverse $2K_3-[e]$ edge joining move.

$(iii) \Rightarrow (i)$. We employ induction on the number of vertices. For the signed graph $2K_3-[e]$, the statement follows from Example~\ref{ex:2k3minusedge}.

Now, let $n\geq 4$, and suppose (i) holds for all signed quotient  graphs satisfying (iii) with at most $n-1$ vertices. Let $(G_0,\psi)$ have $n$ vertices, and  suppose first that the last move in the construction sequence of $(G_0,\psi)$ is not a $2K_3-[e]$ edge joining move.   Then we let $(G'_0,\psi')$ be the penultimate graph in the construction sequence of $(G_0,\psi)$. By the induction hypothesis and Theorem~\ref{thm:antisym1}, there exists a well-positioned $\mathbb{Z}_2$-symmetric realisation of the covering graph of $(G'_0,\psi')$  in $(\bR^2,\|\cdot\|_\P)$ (where the reflection $\tau(-1)$ is in the mirror $\ker\varphi_{F_1}$)  so that the induced monochrome subgraphs $G'_{F_1,0}$ and $G'_{F_2,0}$ of $G'_0$ are both spanning, 
$G'_{F_1,0}$ is a  tree,  and $G'_{F_2,0}$ is an unbalanced map graph. By Theorem~\ref{thm:antisym1} it suffices to show that the vertex (or vertices) of $G\setminus G'$ can be placed so that $(G,p)$ is well-positioned, $\bZ_2$-symmetric and the induced monochrome subgraphs $G_{F_1,0}$ and $G_{F_2,0}$ of $G_0$ are both spanning, $G_{F_1,0}$ is a tree  and $G_{F_2,0}$ is an unbalanced map graph.

If $(G_0,\psi)$ is obtained from $(G'_0,\psi')$ by a H1a-, H1b-, H2a-, H2b-, vertex-to-$K_4$, or edge-to-$K_3$ move, then we may use exactly the same placement for the vertex (or vertices) of  $G\setminus G'$ as in the proof of Theorem~\ref{thm:forced2} to obtain the desired realisation of $G$. 

So it remains to consider the case where the last move in the construction sequence of $(G_0,\psi)$ is a $2K_3-[e]$ edge joining move. Suppose $(G_0,\psi)$  is obtained by joining the  signed quotient graphs $(G'_0,\psi')$ and $(G''_0,\psi'')$  by an edge $[f]$ with end-vertices $[u]\in G'_0$ and $[v]\in G''_0$, where $G''_0=2K_3-[e]$. By the induction hypothesis, Theorem~\ref{thm:antisym1}, and Example~\ref{ex:2k3minusedge}, the covering graphs of  $(G'_0,\psi')$  and   $(G''_0,\psi'')$ can be realised as $\mathbb{Z}_2$-symmetric frameworks $(G',p)$ and $(G'',q)$  in $(\bR^2,\|\cdot\|_\P)$ (where the reflection $\tau(-1)$ is in the mirror $\ker\varphi_{F_1}$)  so that the induced monochrome subgraphs $G'_{F_1,0}$ and $G'_{F_2,0}$ of $G'_0$, and   $G''_{F_1,0}$ and $G''_{F_2,0}$ of $G''_0$, are all spanning, 
$G'_{F_1,0}$ and $G''_{F_1,0}$  are  trees,  and $G'_{F_2,0}$ and $G''_{F_2,0}$  are unbalanced map graphs. 
Now, consider the line $L$ which passes through the points $p_{\tu}$ and $\tau(-1)p_{\tu}$, and translate the framework  $(G'',q)$ along the mirror line $\ker\varphi_{F_1}$ (thereby preserving the reflection symmetry of $(G'',q)$) so that the points $\hat q_{\tv}$ and $\tau(-1)\hat q_{\tv}$ of the translated framework $(G'',\hat q)$ lie on $L$. If there are vertices of $(G',p)$ and $(G'',\hat q)$ which are now positioned at the same point in $(\bR^2,\|\cdot\|_\P)$, then we perturb the vertices of $(G'',\hat q)$ slightly without changing the induced colourings of the edges of $G$ until all of the vertices  have different positions.   Then  $[f]$ has induced framework colour $[F_1]$, the realisation  of  $G$ is well-positioned, and the induced monochrome subgraphs of $G_0$ are $G_{F_1,0}=G'_{F_1,0}\cup G''_{F_1,0}\cup\{[f]\}$ and $G_{F_2,0}=G'_{F_2,0}\cup G''_{F_2,0}$. Clearly, $G_{F_1,0}$ is a spanning tree  and $G_{F_2,0}$ is a spanning unbalanced map graph of $G_0$.

\end{proof}

Note that the final argument in the proof of Theorem~\ref{thm:anti2} can immediately be generalised to show that in the recursive  construction sequence in Theorem~\ref{thm:anti2} $(iii)$, we may replace the $2K_3-[e]$ edge joining move with an edge joining move  that joins two \emph{arbitrary} $(2,2,1)$-gain tight signed quotient graphs by an edge of arbitrary gain.

\section{Further remarks}

At the graph level, we provided characterisations for  symmetric and anti-symmetric infinitesimal rigidity in terms of gain-sparsity counts and recursive constructions (see Theorems~\ref{thm:forced2} and \ref{thm:anti2}). However, a characterisation in terms of monochrome subgraph decompositions (analogous to the results in Section~\ref{Sect:framework}) was not given, as it is not clear whether for an arbitrary decomposition of a signed quotient graph into a monochrome spanning unbalanced map graph and a monochrome spanning tree, there always exists a grid-like realisation of the covering graph with reflectional symmetry which respects the given edge colourings. These realisation problems are non-trivial \cite{kit-sch,kit-sch1} and even arise in the non-symmetric situation \cite{kitson}. 

It is easy to see that a necessary count for the existence of a $2$-dimensional infinitesimally rigid grid-like $\mathbb{Z}_2$-symmetric  realisation of a graph $G$ is that its  signed quotient graph   $(G_0,\psi)$ contains a spanning subgraph with $F$ edges which is \emph{$(2,2,0)$-gain-tight}, i.e., $|F|= 2|V(F)|$, $|F'|\leq 2|V(F')|-2$ for every balanced $F'\subseteq F$, and $|F'|\leq 2|V(F')|$ for every  $F'\subseteq F$. This is because $(G_0,\psi)$ needs to contain two monochrome   connected unbalanced spanning map graphs, by Corollary \ref{cor}. However, these conditions are clearly not sufficient.

Finally, it is natural to ask whether the results of this paper can be extended to grid-like frameworks in the plane with half-turn symmetry. A necessary condition for a grid-like half-turn-symmetric framework to be  symmetrically isostatic is that the associated signed quotient graph $(G_0,\psi)$ satisfies $|E_0|= 2|V_0|$, as  there are no  symmetric trivial infinitesimal flexes with respect to the half-turn symmetry group. In fact, $(G_0,\psi)$ must clearly be $(2,2,0)$-gain-tight. A combinatorial characterisation of  $(2,2,0)$-gain-tight graphs, however, has not yet been obtained (see also \cite{anbs}). For anti-symmetric isostaticity, the situation is much easier, as we need  $(G_0,\psi)$ to satisfy $|E_0|= 2|V_0|-2$ and $|F|\leq 2|V(F)|-2$ for every  $F\subseteq E_0$, and these types of signed quotient graphs have been described in \cite{anbs}. 

More generally, it would of course also be of interest to extend the results of this paper to frameworks with larger symmetry groups and to different normed spaces.

\end{document}